\newcommand{\todo}[2][yo]{}
\tikzstyle{vertex}=[circle,draw=black,fill=black,inner sep=0mm,minimum size=1mm]
\tikzstyle{ar}=[<=1pt,>=stealth',semithick]
\begin{document}

\title{Interval-Valued Rank in Finite Ordered Sets\thanks{  PNNL-SA-105144.}}
%\date{August, 2014}
\date{}

\author{
	Cliff Joslyn\thanks{National Security Directorate, Pacific
Northwest National Laboratory. Corresponding author, cliff.joslyn@pnnl.gov,
206-552-0351.},
	Emilie Hogan\thanks{Fundamental Sciences Directorate, Pacific
Northwest National Laboratory}, and
	Alex Pogel\thanks{Physical Science Laboratory, New Mexico State University}
	}

\maketitle

\begin{abstract}

We consider the concept of rank as a measure of the vertical levels and
positions of elements of partially ordered sets (posets). We are motivated by the
need for algorithmic measures on large, real-world
hierarchically-structured data objects like the semantic hierarchies of
ontological databases. These rarely satisfy the strong property of gradedness, which
is required for traditional rank functions to exist. Representing such
semantic hierarchies as finite, bounded posets,
we recognize the duality of ordered structures to motivate rank functions
which respect verticality both from the bottom and from the
top. Our rank functions are thus interval-valued, and always exist,
even for non-graded posets, providing order homomorphisms to an interval
order on the interval-valued ranks. The concept of rank width arises
naturally, allowing us to identify the poset region with point-valued
width as its longest graded portion (which we call the ``spindle'').
A standard interval rank function is
naturally motivated both in terms of its extremality and on pragmatic
grounds. Its properties are examined, including the relationship to
traditional grading and rank functions, and methods to assess
comparisons of standard interval-valued ranks.

\end{abstract}

\tableofcontents

\section{Introduction}

A characteristic of partial orders and partially ordered sets (posets) as used in
order theory \cite{DaBPrH90} is that they are
amongst the simplest structures which can be called ``hierarchical'' in the sense of
admitting to descriptions in terms of {\em levels}. And yet we have found
that the mathematical development of the concepts of level, ``depth'', or
``rank'' is surprisingly incomplete.  Rank is used in order theory in two
related senses:

\begin{itemize}

\item First, the {\bf rank function} (if it exists) of a poset is a
    canonical scalar monotonic function representing the vertical level of
    each element. However, this concept has been literally one-sided,
    requiring a bias towards a particular ``pointing'' of the poset as
    being either ``top down'' or ``bottom up'', arbitrarily.

\item And secondly, a poset as a whole can be {\bf graded}, or ``have
    rank'', meaning that there exists such a rank function which assigns
    each element a {\em unique} vertical level consistent with its covering
    structure, such that a single step in the hierarchy results in a single
    increment or decrement of rank. But while graded posets dominate the
    primary results in formal lattice theory \cite{BiG40}, this is a strong
    property. Real-world data objects with hierarchical structure cannot
    generally be assumed to be graded, and in certain fields virtually none
    of them are.

\end{itemize}

We thus seek a concept of rank in general finite posets, whether
graded or not. We believe that this goal is satisfied naturally by extending
rank to be intervally-valued, and to use interval orders to compare
the vertical level of components of hierarchies.

Our motivation is from the kinds of hierarchically-structured data objects
which abound in computer science, and especially the semantic typing
systems of modern knowledge systems such as ontological databases and
object-oriented typing hierarchies. These structures are built around
{\em semantic hierarchies}
as their cores. These, in turn, are
collections of objects (classes or linguistic concepts) which are organized
into hierarchies such as taxonomic (subsumptive, ``is-a''); meronomic
(compositional, ``has-part''); and implication (logical, ``follows-from'')
relations.  Prominent examples include WordNet\footnote{\tt
wordnet.princeton.edu}  in the computational linguistics community
\cite{FeC98} and the Gene Ontology (GO\footnote{\tt www.geneontology.org}) in
the computational genomics community \cite{AsMBaC00}. \fig{consort_blank}
shows a portion of the GO. Black text indicates nodes representing biological
processes, and are arranged in a directed acyclic graph (DAG) representing
subsumption (e.g.\ ``DNA repair'' is a kind of ``DNA metabolism''). The names
of genes from three species of model organism are shown in colored text as
``annotations'' to these nodes, indicating that those genes perform those
functions.

\begin{figure}[htbp]
  \begin{center}
    \includegraphics[scale=0.33]{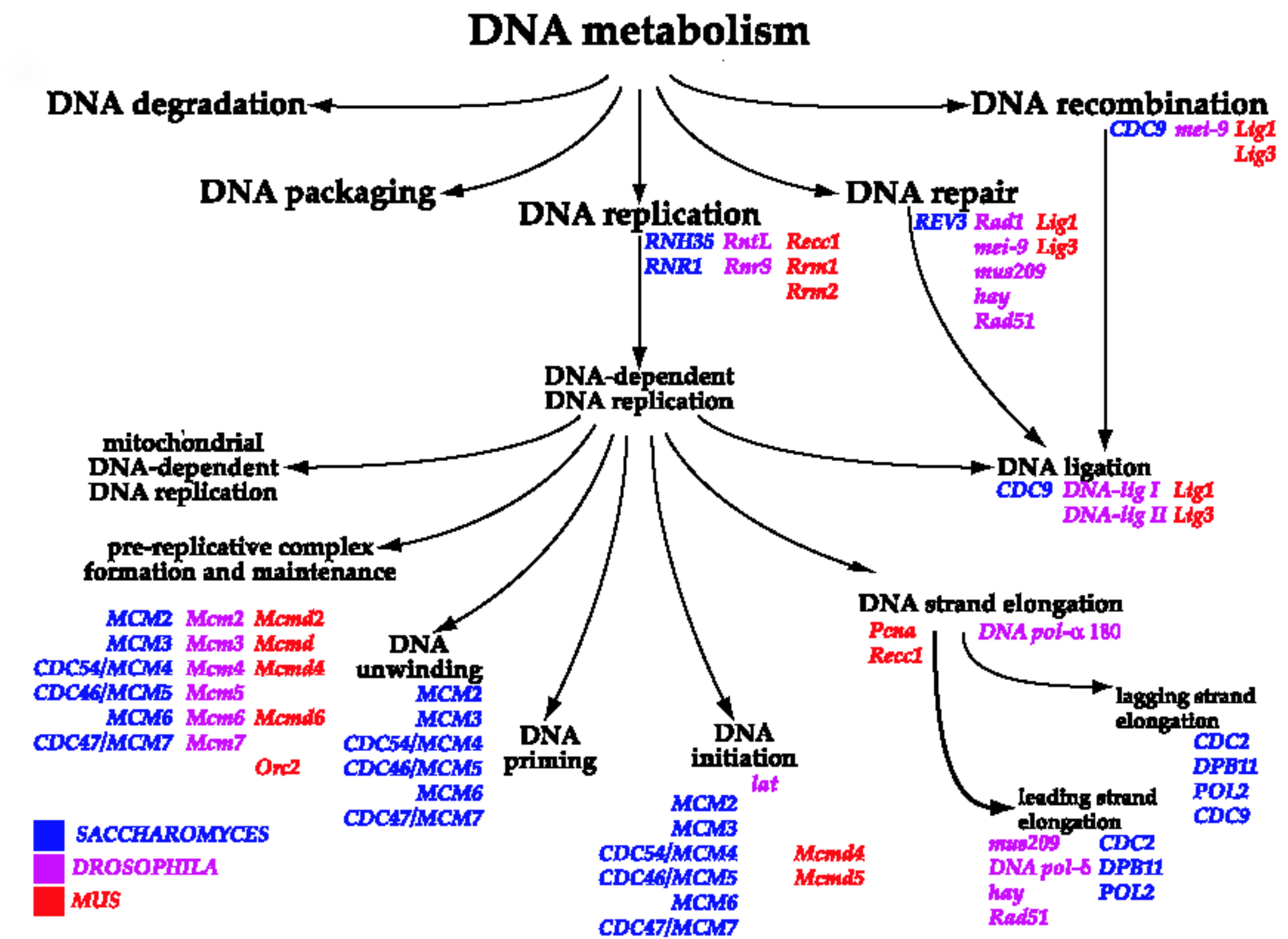}
  \end{center}
  \caption{A portion of the Biological Process branch of
the Gene Ontology (adapted from \cite{AsMBaC00}). The database is structured
as a large, top-rooted directed acyclic graph of genomic functional
categories, labeled with the genes of multiple species.}\label{consort_blank}
\end{figure}

%\myfig{.33}{consort_blank}{}{A portion of the Biological Process branch of
%the Gene Ontology (adapted from \cite{AsMBaC00}). The database is structured
%as a large, top-rooted directed acyclic graph of genomic functional
%categories, labeled with the genes of multiple species.}{}

\fig{consort_blank} shows only a small fragment of one portion of the GO. But
typical of many such real-world semantic hierarchies, the GO has grown to be
quite large, currently over thirty thousand concepts. As the number and size
of semantic hierarchies grows, it is becoming critical to have computer
systems which are appropriate for managing them, and especially important to
complement manual methods with algorithmic approaches to tasks such as
construction, alignment, annotation, and visualization. These tasks in turn
require us to have a solid mathematical grounding for the analysis of
semantic hierarchies, and especially the ability to measure entire
hierarchies and portions of hierarchies using such concepts as distances and
sizes of regions.

One particular aspect of hierarchical structure which needs to be measurable
is the placement of elements vertically relative to each other in ranks or
levels. This is critical, for example, in visualization applications.
\fig{cellcelllabels0} (from \cite{JoCMnS06}), shows a visualization of a GO
portion, wherein the vertical layout over a large hierarchical structure is
essential.

\myeps{.3}{cellcelllabels0}{An example ontology layout result, from
\cite{JoCMnS06}.}

As mathematical data objects, semantic hierarchies resemble
top-rooted trees. But with the presence of significant amounts of ``multiple
inheritance'' (nodes having more than one parent, as for ``DNA ligation'' in
\fig{consort_blank}), and also the possible inclusion of transitive links,
they must in general be represented at most as DAGs. And since the primary
semantic categories of subsumption and composition are transitive, the proper
mathematical grounding for such algorithms and measures is order theory,
representing semantic hierarchies as posets.

While order theory offers a fundamental formalization of hierarchy in
general, it has been largely neglected in knowledge systems technology, and
we believe it has promise in the development of tools for managing semantic
hierarchies. The most prominent approaches to managing the GO and WordNet are
effectively {\it ad hoc} and {\it de novo} (e.g.\ \cite{BuAHiG06}). And
conversely, even outside of the domain of knowledge systems applications, the
precise formulation in ordered sets of concepts such as distance and
dispersion are little known and in many cases underdeveloped in the first
place. Our prior work \cite{JoC04a,JoCHoE10,JoCMnS04b,KaTJoC06,VeKCoJ05e} has
advanced the use of measures of distance and similarity, and
characterizations of mappings and linkages within semantic hierarchies.

Semantic hierarchies are rarely graded, so
the
standard concepts of rank and grade are not especially useful for dealing
with measuring and laying out their vertical levels.
In this paper, we seek a coherent formulation of
vertical element placement in posets. We begin with some notational elements
about posets, maximal chains, maximum chains (which we call the {\em spindle}), and
order morphisms. We then introduce concepts and notation for integer
intervals and operations thereon, including our treatment
of interval orders, including strong, weak, and subset orders.
We use these tools to consider the general sense of rank
and vertical rank in posets as a necessarily two-sided concept, motivating
rank as an interval-valued function inducing a strict order morphism to some
interval order.  Cases are considered, and then a strict version of an
interval rank function is introduced, which preserves strict homomorphism not
just for the intervals overall, but also for their constituent endpoints as
well.

We can identify a particular interval rank function for the weak
interval order, which we call the standard interval rank function. We point
out that this standard interval rank function is special both on the basis of
its extremality, and as derived from a procedural approach to natural
scalar-valued rank. We consider some properties of the standard
interval rank, and the use of interval analysis to be able to
compare interval-valued
ranks in terms of their interval-valued separation.
In this way, we can gain a sense of the vertical distances between
even non-comparable poset elements.

%\todo[inline]{(EAH) Moved the end of the introduction to the other paper}

\section{Preliminaries}
Throughout this paper we will use $\N$ to denote  $\{ 0, 1, \ldots \}$, the
set of integers greater than or equal to 0, and for $N \in \N$, the set of
integers between 0 and $N$ will be denoted $\N_N \define \{ 0, 1, \ldots, N
\}$.

\subsection{Ordered Sets}\label{OrderedSetsDefns}

%\todo[inline]{(EAH) Removed antichain, width, dual poset, subposet, linear
%extension, dimension, conjugacy in this paper after doing a word search in
%the rest of the paper for these concepts and not getting any hits}

See e.g.\ \cite{DaBPrH90,ScB03,TrW92} for the basics of order theory, the
following is primarily for notational purposes.

Let $P$ be a finite set of elements with $|P| \ge 2$, and $\le$ be a binary
relation on $P$ (a subset of $P^2$) which is reflexive, transitive, and
antisymmetric. Then $\le$ is a {\bf partial order}, and the structure $\poset
= \tup{ P, \le }$ is a {\bf partially ordered set} (poset). $a < b$
means that $a \le b$ and $a \neq b$, and is a {\bf
strict order} on $P$, which is an irreflexive partial order. A strict
order $<$ can be turned into a partial order through {\bf reflexive closure}:
$\le \define < \un \,\{ \tup{ a, a } \in P^2 \}$.

For any pair of elements $a,b \in P$, we say $a \le b \in P$ to mean that $a,b
\in P$ and $a \le b$. And for $a \in P, Q \sub P$, we say $a \le Q$ to mean
that $\forall b \in Q, a \le b$. If $a \le b \in P$ or $b \le a \in P$ then
we say that $a$ and $b$ are {\bf comparable}, denoted $a \sim b$. If not,
then they are {\bf incomparable}, denoted $a \| b$. For $a,b \in P$, let $a
\cover b$ be the {\bf covering relation} where $a \le b$ and $\nexists c \in
P$ with $a < c < b$.

A set of elements $C \sub P$ is a {\bf chain} if $\forall a, b \in C, a \sim b$.
If $P$ is a chain, then $\poset$ is called a {\bf total order}. A chain $C
\sub P$ is {\bf maximal} if there is no other chain $C' \sub P$ with $C \sub
C'$. Naturally all maximal chains are {\bf saturated}, meaning that $C = \{
a_i \}_{i=1}^{|C|} \sub P$ can be sorted by $\le$ and written as $C = a_1
\cover a_2 \cover \ldots \cover a_{|C|}$. The {\bf height} $\height(\poset)$
of a poset is the size of its largest chain. Below we will use $\height$
alone for $\height(\poset)$ when clear from context.

For any subset of elements $Q \sub P$, let $\psub{Q} = \tup{ Q, \le_Q }$ be the
sub-poset determined by $Q$, so that for $a,b \in Q$, $a \le_{Q} b \in Q$ if
$a \le b \in P$.

For any element $a \in P$, define the {\bf up-set} or {\bf principal filter}
$\up a \define \{ b \in P \st b \ge a \}$, {\bf down-set} or {\bf principal
ideal} $\down a \define \{ b \in P \st b \le a \}$, and {\bf hourglass}
$\Xi(a) \define \up a \un \down a$. For $a \le b \in P$, define the {\bf
interval} $[a,b] \define \{ c \in P \st a \le c \le b \} = \up a \int \down b
$.

For any subset of elements $Q \sub P$, define its maximal and minimal elements
as
	\[ \Maxxx(Q) \define
		\{ a \in Q \st {\nexists} b \in Q, a < b \}	\sub Q  \]
	\[ \Minnn(Q) \define
		\{ a \in Q \st {\nexists} b \in Q, b < a \} \sub Q,	\]
	called the {\bf roots} and {\bf leaves} respectively. Except where
noted, in this paper we will assume that our posets $\poset$ are bounded, so
that $\bot \le \top \in P$ with $\Maxxx(P) = \{ \top \}, \Minnn(P) = \{ \bot
\}$. Since we've disallowed the degenerate case of $|P|=1$, we have $\bot <
\top \in P$. All intervals $[a,b]$ are bounded sub-posets, and since $\poset$
is bounded, $\forall a \in P, \up a = [ a, \top ], \down a = [ \bot, a ]$,
and thus $\bot,\top \in \Xi(a) \sub \poset = [\bot,\top]$. An example of a
bounded poset and a sub-poset expressed as an hourglass is shown in
\fig{allex2}.

\mytikz{\bddExWithHourglass}{>=latex, line width=0.75pt}{allex2}{(Left) The
Hasse diagram (canonical visual representation of the covering relation
$\cover$) of an example bounded poset $\poset$. (Right) The Hasse diagram of
the sub-poset $\psub{\Xi(J)}$ for the hourglass $\Xi(J) = \up J \un \down J =
[ J, \top ] \un [ \bot, J ] = \{ J,C,K,\top \} \un \{\bot,J\} = \{ \bot, C,
J, K, \top \}$.}

The following properties are prominent in lattice theory, and are available
for some of the highly regular lattices and posets that appear there (see
e.g.\ Aigner \cite{AiM79}).

\begin{defn}[(Jordan-Dedekind Condition)]
A poset $\poset$ has the Jordan-Dedekind condition (we say that $\poset$ is
JD) when all saturated chains (note: not just maximal chains) connecting two
elements $a \le b \in P$ are the same size.
\end{defn}

\begin{defn}[(Rank Function and Graded Posets)]
For a top-bounded poset $\poset$, a function
$\func{\rho}{P}{\N_{\height-1}}$ is a {\bf rank function} when $\rho(\top) =
0$ and $\forall a \cover b \in P, \rho(a) = \rho(b) - 1$.\footnote{Aigner's
posets are actually ``pointed'' the other way, defined on bottom-bounded
posets, so his definition is actually with $+$ here. It is otherwise
identical. The value of our alternate pointing will be discussed in
\sec{sec-intrank} below, and will also arise in Definition \ref{posetrank}.}
A poset $\poset$ is {\bf graded}, or {\bf fully graded}, if it has a rank
function.
\end{defn}

\begin{thm}\cite{BiG40} \label{graded}
A poset $\poset$ is graded iff it is JD.
\end{thm}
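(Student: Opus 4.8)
The plan is to prove the two implications separately, with the common engine being a telescoping argument that reads off the length of a saturated chain from the rank values at its endpoints.

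First I would treat the direction ``graded $\Rightarrow$ JD''. Assume a rank function $\rho$ exists. Given any comparable pair $a \le b$ and any saturated chain $a = c_0 \cover c_1 \cover \cdots \cover c_k = b$ joining them, the defining property of $\rho$ forces $|\rho(c_{i+1}) - \rho(c_i)| = 1$ at every covering step, and all in the same direction, so that $\rho$ changes monotonically by one per step. Telescoping across the chain then yields $k = |\rho(b) - \rho(a)|$. Since the right-hand side depends only on the endpoints $a$ and $b$ and not on the particular chain chosen, every saturated chain from $a$ to $b$ has the same length $k$. This is exactly the JD condition.

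Next, for ``JD $\Rightarrow$ graded'', I would construct a rank function explicitly. Using boundedness and finiteness, every element $a$ lies on at least one saturated chain from $a$ up to $\top$; define $\rho(a)$ to be the number of covering steps in such a chain. The JD condition, applied to the comparable pair $a \le \top$, guarantees that all saturated chains from $a$ to $\top$ have the same length, so $\rho$ is well-defined. One then checks $\rho(\top) = 0$ (the empty chain), and that for any cover $a \cover b$ the chain obtained by prepending the step $a \cover b$ to a saturated chain from $b$ to $\top$ is itself a saturated chain from $a$ to $\top$; hence $\rho$ changes by exactly one across each cover, as required. Finally, since any such chain is a chain in $\poset$ it has at most $\height$ elements, so $\rho(a) \le \height - 1$ and the codomain $\N_{\height-1}$ is respected.

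The main obstacle, and the crux of the whole argument, is the well-definedness of $\rho$ in the second direction: without JD the ``chain length to $\top$'' is genuinely ambiguous, and it is precisely JD that collapses this ambiguity. I would be careful to observe that I only need JD for chains terminating at $\top$, which is weaker than the full hypothesis, and to confirm that the covering increment derived from the construction is consistent throughout the poset, so that no conflicting assignments arise at an element reachable by covers from several elements above it. Everything else is routine bookkeeping on saturated chains.
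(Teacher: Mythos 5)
Your argument is correct, and it is the standard proof of this classical result; the paper itself does not prove it but simply cites Birkhoff, so there is no in-paper argument to compare against. Both directions are sound: the telescoping identity $k=|\rho(b)-\rho(a)|$ for a saturated chain of $k$ covers settles graded~$\Rightarrow$~JD, and your well-definedness observation (JD applied to the pair $a\le\top$) is indeed the crux of the converse. One small point worth noting: your constructed $\rho$ satisfies $\rho(a)=\rho(b)+1$ for $a\cover b$, which matches the paper's intended top-pointed convention (ranks growing as one descends from $\top$, cf.\ $R^+(\top)=[0,0]$ and $R^+(\bot)=[\height-1,\height-1]$), even though the displayed definition's sign reads the other way; your use of $|\rho(c_{i+1})-\rho(c_i)|=1$ makes the first direction robust to that convention in any case.
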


Let $\chains(\poset) \sub \pow{P}$ be the set of all maximal chains of
$\poset$. We assume that $\poset$ is bounded, therefore $\forall C \in
\chains(\poset), \bot,\top \in C$. For any element $a \in P$, define its {\bf
centrality} as the length of the largest maximal chain it sits on, i.e., the
height of its hourglass
	\[ S(a) \define
		\height( \Xi(a) ) = \height( \up a ) + \height( \down a ) - 1. \]
	We will refer to the {\bf spindle chains} of a poset $\poset$ as
the set of its maximum length chains
	\[ \spindle(\poset) \define
		\left\{ C \in \class( \poset ) \st |C| =
			\height \right\}.	\]
	The {\bf spindle set}
	\[ I(\poset) \define \Un_{C \in \spindle(\poset)} C \sub P	\]
	is then the set of {\bf spindle elements}, including any elements which sit on
a spindle chain. Note that if $P$ is nonempty (as we require)
then there is always at least
one spindle chain and thus at least one spindle element, so also $\spindle(\poset),
I(\poset) \neq \emptyset$.

In our example in \fig{allex2}, we have $\height =
5$, $|\chains(\poset)| = 6$, $\spindle(\poset) = \{ \bot \cover A \cover H
\cover K \cover \top \}$, and $S(J) = \height( \up J ) + \height(\down J ) -
1 = 4$.

Given two posets $\poset_1 = \tup{ P, \le_P }$ and $\poset_2 = \tup{ Q, \le_Q
}$, a function $\func{f}{P}{Q}$ is an \textbf{order homomorphism}
provided $a \le_P b \in P \implies f(a) \le_Q f(b)$. Compare to the stronger, and possibly more familiar
notion of an \textbf{order embedding} \cite{DaBPrH90} where $a \le_P b \iff f(a) \le_Q f(b)$.
In this paper we will only be using order homomorphisms which simply map
the structure of one partial order into another without the reverse needing to be true.
%Given two posets $\poset_1 = \tup{ P, \le_P }$ and $\poset_2 = \tup{ Q, \le_Q
%}$, a function $\func{f}{P}{Q}$ is an \todoe{used in this paper? \textbf{order embedding}} if $\forall
%a,b \in P$ we have $a \le_P b \iff f(a) \le_Q f(b)$. A weaker notion is that
%of $f$ being an \textbf{order homomorphism}
%if $a \le_P b \in P \implies f(a) \le_Q f(b)$.
We can also say that $f$
\textbf{preserves the order} $\le_P$ into $\le_Q$, and is an \textbf{isotone}
mapping from $\poset_1$ to $\poset_2$. If $\forall a <_P b \in P, f(a) <_Q
f(b) \in Q$ then we say that $f$ does all this \textbf{strictly}. If instead
$f$ is an order homomorphism from $\poset_1$ to the dual $\tup{ Q, \ge_Q }$,
then we say that $f$ \textbf{reverses the order}, or $f$ is an
\textbf{antitone} mapping. If $\func{f}{P}{Q}$ is an order homomorphism, then
we can denote $f(\poset_1) \define \tup{ f(P), \le_f }$ as the
\textbf{homomorphic image} of $\poset_1$, with
	\[ f(P) \define \{ f(a) \st a \in P \} \sub Q,	\quad
		\le_f\, \, \define
            \leq_Q\!\left|_{f(P)\times f(P)}\right. \]
%			\{ \tup{ f(a), f(b) } \st a, b \in P \}\, \int \le_Q.	\]
When clear from context, we will simply re-use $\le$ as the relevant order to
its base set, e.g.\ for an isotone $\func{f}{P}{Q}, a \le b \in P \implies
f(a) \le f(b) \in Q$.

%or inducing a \textbf{weak order embedding} $\poset_1 \mapsto \poset_2$,

Finally, we recognize $\tup{ \N, \le }$ as a total order using the normal
numeric order $\le$, and observe that for any bounded poset $\poset$, the
functions $\height(\up \cdot), \height(\down \cdot): P \rightarrow
\N_{\height}$ induce strict antitone and isotone order morphisms,
respectively. That is, if $a \cover b$ then
\[ \height(\up a) \geq \height(\up b)+1 > \height(\up b) \qquad\text{and}\qquad
    \height(\down a) < \height(\down a)+1 \leq \height(\down b). \]

\subsection{Numeric Intervals: Operations and Orders} \label{intervals}

Since our rank functions will be interval-valued, we explicate the concepts
surrounding the possible ordering relations among intervals. Our formulation
is a bit nonstandard, but helpful for this particular application. Our view
actually resonates with algebraic approaches to interval analysis used in
artificial intelligence, such as Allen's interval algebra
\cite{AlJ83,LaPMaR87,LiG90}. But these methods are not order-theoretical, nor
are we aware of prior use of our weak interval order $\le_W$ in interval
analysis proper. We begin by defining operations on intervals before going
into orders among intervals.

$\tup{\R,\le}$ is a total order, so for any $x_* \le x^* \in \R$, we can
denote the (real) interval $\bar{x} = [ x_*, x^* ]$, and $\overline{\R}$ as
the set of all real intervals on $\R$, so that $\bar{x} \in \overline{\R}$.
Additionally, for $N \in \N$ let $\overline{N}$ be the set of all intervals
whose endpoints are nonnegative integers $\le N$. We define the {\bf interval
midpoint} $\hat{\bar{x}} = \frac{x_* + x^*}{2} \in \R$ and {\bf interval
width} $W(\bx) \define | x^* - x_* | \in \R$.

Operations on real intervals (see e.g.\ \cite{MoR79}) are defined setwise, so
that for $\bar{x},\bar{y} \in \overline{\R}$ and a unary or binary operation
$\bullet$,
	\[\bullet(\bar{x}) \define
		\{ \bullet(z): z \in \bar{x} \}, \qquad \bar{x} \bullet \bar{y} \define\{ z_1 \bullet z_2: z_1 \in \bar{x}, z_2 \in \bar{y} \}.\]
	This yields specific real interval operations of {\bf addition}
$\bx + \by \define [ x_* + y_*, x^* + y^* ]$ (often referred to as the
Minkowski sum \cite{BeR1966}); {\bf subtraction} $\bx - \by \define [ x_* -
y^*, x^* - y_* ]$; and {\bf absolute value} $|\bx| = [ |\bx|_*, |\bx|^* ]$,
where
\begin{align*}
  |\bx|_* &\define \left\{\begin{array}{ll}
                            0 & x_*x^* \leq 0\\
                            \minn( |x_*|, |x^*| ) & x_*x^* > 0
                          \end{array}\right.\\
  |\bx|^* &\define \maxx( |x_*|, |x^*| ).
\end{align*}
We also have the {\bf separation} $\| \bx, \by \| \define | \bx - \by|$, an
interval valued function of two intervals which represents the interval
between the minimum and maximum difference between any two chosen points, one
from each interval.

In the literature \cite{BaKFiP72,FiP85,FiP1985book,TrW97} the standard
ordering on a set of intervals $P$ is a binary relation $\le$ satisfying the
Ferrers property \cite{BuA03,DiSDeB11}, so that $\forall x,y,z,w \in P, x \le
y, z \le w$ implies $x \le w$ or $z \le y$. This results in the almost
completely universal recognition of the term ``interval order'' to mean what
we call here the {\bf strong interval order} $\le_S$. While it is the only
interval order we consider here which satisfies Ferrers, it is only one
possible order on real intervals. It is the strongest, but also the least
useful for our purposes compared to the others, although they are much less
widely recognized.

	\begin{defn}[(Strong Interval Order)] \label{strong}
	Let $<_S$ be a strict order on $\overline{\R}$ where $\bar{x} <_S
\bar{y}$ iff $x^* < y_*$. Let $\le_S$ be the reflexive closure of $<_S$, so
that $\bar{x} \leq_S \bar{y}$ iff $x^* < y_*$ or $\bar{x} = \bar{y}$.
	\end{defn}

In addition to the strong interval order $\le_S$, some \cite{TaP96b}
recognize  $\sub$ as an {\bf interval-containment} or {\bf subset interval
order}.

	\begin{defn}[(Subset Interval Order)]
	Let $\sub$ be a partial order on $\overline{\R}$ where $\bar{x}
\sub \bar{y}$ iff $x_* \ge y_*$ and $x^* \le y^*$.
	\end{defn}

But one of the most natural ordering relations between two intervals is given
by the product order $\le \times \le$ based on $\tup{ \R, \le }$. We call
this the {\bf weak interval order} (not to be confused with Fishburn's weak
order \cite{FiP1985book}, which is different).

	\begin{defn}[(Weak Interval Order)] \label{weak}
	Let $\le_W$ be a partial order on $\overline{\R}$ where $\bar{x}
\le_W \bar{y}$ iff $x_* \le y_*$ and $x^* \le y^*$.
	\end{defn}

These three orders on $\overline{R}$ are related as follows.

\begin{itemize}

\item As their names suggest, the strong order implies the weak order: if
    $\bar{x} \le_S \bar{y}$ then $\bar{x} \le_W \bar{y}$.
\item The weak order and subset order are related. Notice that if two
    intervals are properly weakly ordered (i.e., $\bar{x} <_W \bar{y}$ and
    none of the endpoints are equal) then the intervals are \emph{not}
    comparable in the subset order. Dually, if two intervals are properly
    subset ordered ($\bar{x} \subset \bar{y}$ and none of the endpoints are
    equal) then the intervals are \emph{not} comparable in the weak order.
    This notion of being comparable in one or the other, but not both, is
    called \textbf{conjugacy}. It is an interesting topic, but one we leave
    for consideration in future work \cite{JoCHoE14a}.

%\item The weak order $\leq_W$ and the subset order $\subseteq$ are nearly
%    conjugate orders \cite{BaKFiP72,DuBMiE1941}. First, Papadakis and
%    Kaburlasos \cite{PaSKaV10} observe that where our weak order $\le_W$ on
%    intervals is the product order on the reals $\le \times \le$, the
%    subset order $\sub$ is also a product order, but conjugately as $\ge
%    \times \le$. But $\leq_W$ and $\subseteq$ are not truly conjugate,
%    since it is possible for two distinct intervals to be comparable in
%    both $\leq$ and $\subseteq$ if there is equality at one of the
%    endpoints. As an example, $[3,4]\subseteq [2,4]$ and $[2,4] \leq_W
%    [3,4]$.

\end{itemize}

Based on these observations, we can identify two additional orders available
on real intervals:

\begin{itemize}

\item The dual to $\le_W$ is $\bar{x} \ge_W \bar{y}$, so that $\ge_W = \ge
    \times \ge$.

\item The dual to the subset order is the superset order, so that
    $\supseteq = \le \times \ge$.

\end{itemize}

%One can ask whether the strong order $\le_S$ also has a conjugate, and the
%answer appears to be yes, at least in small cases. We have experimentally
%identified numerous conjugates and pseudo-conjugates (in the sense in which
%$\le_W$ and $\subseteq$ are pseudo-conjugates) for $\le_S$ on the set of
%integer intervals with endpoints between 1 and 4, but none appear to have a
%compactly expressible structure, as both $\le_W$ and $\subseteq$ do.
%
%\todo[inline]{Does this section (the stuff below this note but before the
%next section begins) fit better later? Maybe at the beginning of Section
%\ref{charIntRank} where this stuff is actually used.}

Here we introduce the notation $\bar{x} \circ_\le \bar{y}$ to mean ``properly
intersecting from the left'', meaning
	\[ \bar{x} \circ_\le \bar{y} \define
		\bx \le_W \by, \bx \not\le_S \by.	\]
	There is also the dual ``properly intersecting from the right'':
	\[ \bx \circ_\ge \by \define
		\bx \ge_W \by, \bx \not\ge_S \by.	\] We will refer back to this in
Section  \ref{quant}. Note that $\circ_\le$ and $\circ_\ge$ are not
themselves partial orders, since they are not transitive. But when we assume
for simplicity that no two of $x_*,x^*,y_*$, and $y^*$ are equal (per
discussion above), then any pair of intervals $\bx,\by$ will stand in exactly one
of these three binary relations $\bx \bowtie \by$ for $\bowtie \in \{ <_S,
\circ_\le, \subset \}$, or their duals.

Later we will be interested in comparing intervals standing in different
order relations $\bowtie$; that is, calculating differences $\by-\bx$ and
separations $\|\bx,\by\|$. Let $\bar{\alpha} = \bar{y}-\bar{x} = [ y_* - x^*,
y^*-x_*]$, and consider the three situations shown in \fig{intorders1}, again
assuming that no two of $x_*,x^*,y_*$, or $y^*$ are equal. Then for the
three cases $\bowtie \in \{ <_S, \circ_\le, \subset \}$, the quantitative
relations among the components of $\bx,\by,\bar{a}=\by-\bx,\|\bx,\by\|$, and
the width $W(\|\bx,\by\|)$ are shown in Table \ref{taborders}. Similar
results hold for the dual cases $\bowtie \in \{ >_S, \circ_\ge, \supset \}$.

\begin{figure}
  \begin{center}
    \includegraphics[scale=0.4]{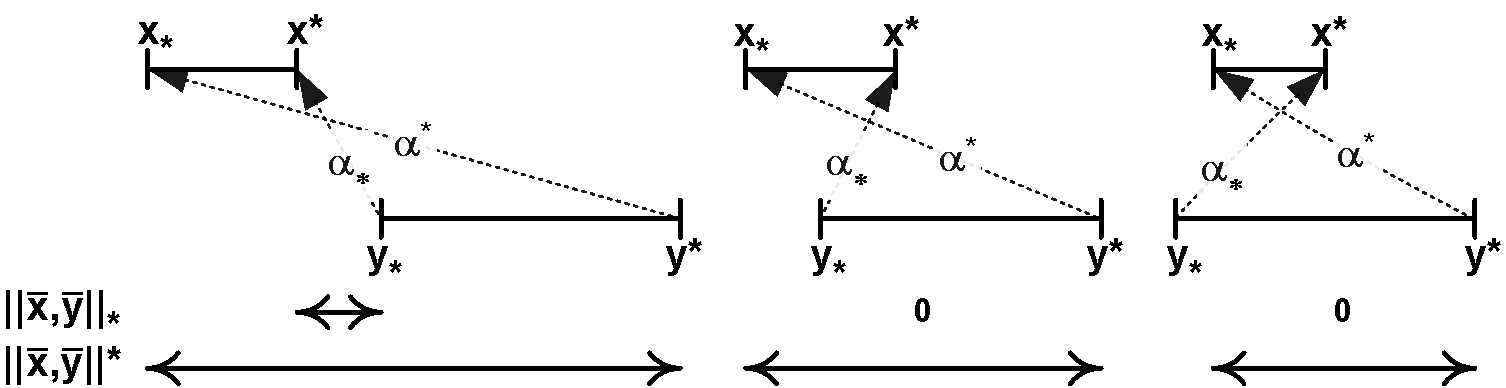}
  \end{center}
  \caption{The three qualitative interval relations,
including the separation interval $\|\bx,\by\| = [ \|\bx,\by\|_*,
\|\bx,\by\|^* ]$. (Left) $\bx <_S
\by$. (Center) $\bx \circ_\le \by$.  (Right) $\bx \subset \by$.}\label{intorders1}
\end{figure}

%\myeps{.4}{intorders1}{The three qualitative interval relations,
%including the separation interval $\|\bx,\by\| = [ \|\bx,\by\|_*,
%\|\bx,\by\|^* ]$. (Left) $\bx <_S
%\by$. (Center) $\bx \circ_\le \by$.  (Right) $\bx \subset \by$.}

\begin{table}

\begin{center}

\begin{tabular}{>{$}c<{$}|>{$}l<{$}||*{3}{>{$}c<{$}|}>{$}c<{$}}

\multicolumn{2}{c||}{$\bx \bowtie \by$}	 &
	\multicolumn{2}{c|}{$\bar{\alpha} = \by - \bx=[ y_*
- x^*, y^*-x_*]$} &
	\|\bx,\by\| & W(\|\bx,\by\|) \\

\hline

\bx <_S \by &	x_* < x^* < y_* < y^* &
	[0,0] <_S \bar{\alpha} &
	0 < \alpha_* < \alpha^* & \bar{\alpha} = [\alpha_*,\alpha^*] &
	\alpha^* - \alpha_* \\

\bx \circ_\le \by & x_* < y_* < x^* < y^* &
	[0,0] \subset \bar{\alpha} &
	\alpha_* < 0 < -\alpha_* < \alpha^* & [0,\alpha^*] &
	\alpha^*	\\

\bx \subset \by &	y_* < x_* < x^* < y^* &
	[0,0] \subset \bar{\alpha} &
	\alpha_* < 0 < \alpha^* & [0,\Maxxx(-\alpha_*,\alpha^*)] &
	\Maxxx(-\alpha_*,\alpha^*) \\

\end{tabular}

\caption{Relations among quantities of interval difference depending on order
relation, assume in all cases that no two of $x_*,x^*,y_*$, and $y^*$ are
equal.}

\label{taborders}

\end{center}

\end{table}

In closing this section, we note that while these ordering relations
$\le_S,\sub$, and $\le_W$ are defined here for real intervals in the chain
$\tup{\R,\le}$, in fact they are also available for general intervals in
arbitrary posets $\tup{P,\le}$, something which we have started to explore
elsewhere \cite{ZaFKrV12}.

\section{Interval Rank} \label{sec-intrank}

Consider again the fragment of the GO in \fig{consort_blank}. This is typical
of our problem domain, where data structures are top-bounded DAGs, with a
moderate amount of multiple inheritance (multiple parents per node),
branching downward very strongly (on average many children per
node, and many more than parents), and whose transitive closures are not
required to be join semi-lattices, but typically are (in other words, it may
be the case that $\exists a,b \in P, |\Minnn( \up a \int \up b )| > 1$). We
model these structures as finite bounded posets $\poset$ by taking their
transitive closures and augmenting $P$ to include a bottom bound $\bot \in P$
such that $\forall a \in P, \bot \le a$. We also include a top bound if one
doesn't already exist.

For pragmatic purposes related to historical usage in the computer science
community, we want $\poset$ to be ``pointed'' so that the top bound $\top \in
P$ is up, but with rank 0, with ranks growing as we descend and shrinking as
we ascend. This introduces regrettable terminological and notational
complexity, and in particular the need to work in origin zero, which makes
counting difficult; and primarily with antitone (order reversing), rather
than isotone (order preserving), functions, as will be seen below.

Our development then begins in earnest by considering the typical
approach used in applications: count the vertical level of a node as how ``far'' it is
down from the top $\top \in P$, usually in terms of some kind of path
length, and perhaps relative to the total height
$\height$. In \fig{consort_blank}, $\top=$ ``DNA metabolism'', and
$\height=5$ (here considering $\po$ unbounded). However, compare the node
``DNA degradation'' with the node ``lagging strand elongation''. Despite
being in quite different apparent vertical locations relative to $\top$ (one
and four down from $\top$ respectively), they share something in common,
namely that they are both {\em leaves}. Thus in fact they (and all the
leaves) are {\em at the bottom}, no matter {\em how} far they may {\em also}
be from the top. So once the bottom $\bot$ is inserted, these leaves $L = \Minnn(P)$
of our original DAG are, in fact, all also ``one up'' from it: $\bot \cover
L$.

So it is imperative to consider the level of an element $a \in P$ not as a
one-sided distance  from a privileged direction in which the poset has been
pointed, but as a {\em joint} concept, involving the distance of $a$ from the
top $\top$, yes, but also from the bottom $\bot$, all in the context of the
total height of the poset. And furthermore, these distances are separate,
since as we have seen, elements can be either close to or far from either the
top or bottom independently. This motivates considering overall rank as being
better represented by these two independent numerical ``levels'', and thus as
a real interval between them.

\subsection{Interval Rank Functions}

We use these principles to characterize, with as few preconsiderations
as possible, the vertical structure of a poset in terms of a rank function
value $R(a)$ for each poset element $a \in P$. We favor rank functions
satisfying:

\begin{itemize}
  \item Rank $R$ should be an interval-valued function on $P$.
  \item Endpoints of these interval-valued ranks should be integers between
      0 and $\height-1$ (to accommodate our origin zero counting).
  \item Ranks should be monotone, and in particular preferably antitone
      (increasing as we descend from $\top \in P$, and decreasing as we
      ascend from $\bot \in P$) rather than isotone (the reverse).
	\end{itemize}

We begin formalizing this as follows.

\begin{defn}[(Interval Rank Function)]\label{IntRank}
Let $\poset = \tup{P,\le}$ be a poset and $\sqsubseteq$ an order on real
intervals.  Then a function, $\func{R_\sqsubseteq}{P}{\overline{\N}}$, with
$R_\sqsubseteq(a) = [ r_*(a), r^*(a) ]$ for $a \in P$, is an {\bf interval
rank function for $\sqsubseteq$} if $R_\sqsubseteq$ is a strict order
homomorphism $\po \mapsto \tup{\overline{\N},\sqsubseteq}$, i.e., for all $a
< b$ in $\po$ we have $R_\sqsubseteq(a) \sqsubset R_\sqsubseteq(b)$. Let
${\bf R}_\sqsubseteq(\poset)$ be the set of all interval rank functions
$R_\sqsubseteq$ for the interval order $\sqsubseteq$ on a poset $\poset$.
\end{defn}

%\subsection{Monotonicity and Strict Interval Ranks}

Below we will frequently omit the subscript and simply use $R$ when clear
from context. Note the value provided by a strict order homomorphism in Def.\
\ref{IntRank}, since without strictness any function $R(a)$ which $\forall a \in P,
R(a)=[r,r]$ for some constant $r \in \N$ would be a valid interval rank
function for any interval order.

When $r_*(a) = r^*(a)$, then denote $r(a) \define r_*(a) = r^*(a)$. Let
$W_R(a) \define W(R(a)) \in \N_N$ be the {\bf rank width} for interval rank
function $R$ (or just $W(a)$ when clear from context), and
	\[ \widehat{R(a)} \define \frac{ r_*(a) + r^*(a) }{2} \in \R	\]
	be the {\bf rank midpoint}. We will say that an element $a \in P$ is {\bf
precisely ranked} if $W(a) = 0$, and a set of elements $Q \sub P$ is precisely
ranked if all $a \in Q$ are precisely ranked.

Where Def. \ref{IntRank} captures monotonicity of the interval ranks, does it
follow that their constituent endpoints are also monotonic? The answer is yes
for the interval orders defined in Section \ref{intervals}.

\begin{prop} \label{EndpointMonotone} %{\bf Emilie: Leaving this for
%you to fix: arrow directions, final paragraph.}
Let $R_\sqsubseteq(a) = [r_*(a),r^*(a)]:\po \rightarrow \ints$ be an interval
rank function for interval order $\sqsubseteq$. Then, $r_*(a)$ and $r^*(a)$
are monotonic functions iff $\sqsubseteq \in \{ \le_W, \ge_W, \subseteq,
\supseteq\}$. In particular:
\begin{enumerate}[(i)]
  \item $r_*$ and $r^*$ are isotone iff $\sqsubseteq = \leq_W$ .
  \item $r_*$ and $r^*$ are antitone iff $\sqsubseteq = \geq_W$.
  \item $r_*$ is antitone and $r^*$ is isotone iff $\sqsubseteq =
      \subseteq$.
  \item $r_*$ is isotone and $r^*$ is antitone iff $\sqsubseteq =
      \supseteq$.
\end{enumerate}
\end{prop}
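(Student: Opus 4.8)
The plan is to deduce the grand biconditional from the four refined equivalences (i)--(iv), observing that any pair of monotone endpoint functions falls into exactly one of the four isotone/antitone sign patterns, and that these four patterns are in bijection with the four orders $\le_W,\ge_W,\subseteq,\supseteq$, which are precisely the four \emph{componentwise} (product) orders $\preceq\times\preceq'$ with $\preceq,\preceq'\in\{\le,\ge\}$. For the ``if'' direction of each case I would simply unpack the defining coordinate inequalities of the order and apply the strict-homomorphism hypothesis of Definition~\ref{IntRank}. For instance, when $\sqsubseteq=\le_W$, any $a<b$ in $\po$ yields $R(a)<_W R(b)$, i.e.\ $r_*(a)\le r_*(b)$ and $r^*(a)\le r^*(b)$ (Definition~\ref{weak}), so both endpoints are isotone; the cases $\ge_W$, $\subseteq$, $\supseteq$ follow verbatim after flipping $\le$ to $\ge$ in the relevant coordinate. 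This direction needs nothing beyond the interval-order definitions of Section~\ref{intervals}.

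For the ``only if'' direction I would run the computation in reverse: assuming a given monotonicity pattern, I recover the relation $R(a)\sqsubset R(b)$ on each comparable pair $a<b$ (at least one such pair exists since $\po$ is bounded with $\bot<\top$), read off that it is exactly the coordinatewise relation defining the matching product order, and invoke mutual exclusivity of the patterns to exclude the other three. Concretely, if both $r_*$ and $r^*$ are isotone, then no comparable pair can realise the reversed coordinate inequality that $\ge_W$, $\subseteq$, or $\supseteq$ each demand, leaving only $\le_W$.

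The main obstacle is this ``only if'' direction, and specifically two degeneracies that must be ruled out. First, the strong order $\le_S$ of Definition~\ref{strong} also forces monotone endpoints: a strict $\le_S$-homomorphism gives $r^*(a)<r_*(b)$ for $a<b$, whence $r_*(a)\le r^*(a)<r_*(b)\le r^*(b)$, so \emph{both} endpoints are (strictly) isotone---the same pattern as $\le_W$. Endpoint monotonicity therefore cannot separate $\le_S$ from $\le_W$ on its own, and likewise $\ge_S$ from $\ge_W$. I would resolve this by restricting $\sqsubseteq$ to the four product orders, justified by the remark of Section~\ref{intervals} that $\le_S$ implies $\le_W$: any strong-order rank function is already a weak-order rank function and so is correctly classified under $\le_W$ rather than supplying a counterexample. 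Second, a constant endpoint is simultaneously isotone and antitone, so the four patterns are not literally disjoint for a single $R$; I would handle this by reading each equivalence as ``$\sqsubseteq$ forces the stated pattern for every interval rank function,'' noting that only $\le_W$ forces \emph{both} endpoints to be non-degenerately isotone, since each of $\ge_W,\subseteq,\supseteq$ admits a rank function on $\po$ whose designated antitone endpoint strictly decreases. Making this scoping and the ``forces-for-all'' reading explicit is the genuinely delicate point; the remaining verifications are direct.
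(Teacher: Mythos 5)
Your proof is correct and follows essentially the same route as the paper's: both directions are handled by unpacking the coordinatewise definitions of the four product orders, using the strict-homomorphism hypothesis of Definition~\ref{IntRank} to get the endpoint inequalities, and then reassembling those inequalities into $R(a)\sqsubset R(b)$ for the converse. The two degeneracies you flag are genuine, and you treat them more carefully than the paper does. On the first: the paper's own Corollary~\ref{corstrong} confirms that $\le_S$ also forces isotone endpoints, so the literal ``only if'' of the proposition needs exactly the charitable reading you give; the paper's proof of the $(\Longrightarrow)$ direction in fact only establishes that $R$ is a strict order homomorphism for the matching product order and then simply asserts ``so $\sqsubseteq=\subseteq$,'' which is the same scoping move you make explicit. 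On the second: the constant-endpoint overlap between the four patterns is passed over in silence in the paper, and your ``forces-for-all'' reading is a reasonable repair. So your additions are a strengthening in rigor rather than a different argument.
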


\begin{proof}\mbox{}
\begin{description}
  \item[$(\Longleftarrow)$] Here we assume that $\sqsubseteq \in
      \{\le_W,\ge_W,\subseteq,\supseteq\}$ and need to show that $r_*$ and
      $r^*$ are monotonic functions of the forms above. We will just prove
      (i) here and observe that the rest of the cases follow analogously.
      Let $R_{\leq_W}(a) = [r_*(a),r^*(a)]$ be an interval rank function
      for the weak interval order. Then, if $a<b \in P$ we know from Def.
      \ref{IntRank} that $R_{\leq_W}(a) <_W R_{\leq_W}(b)$. But this simply
      means that
      \[ r_*(a) \leq r_*(b) \text{ and } r^*(a) \leq r^*(b), \]
      and therefore both $r_*$ and $r^*$ are isotone.
%      For every $a, b$ we know that at least one of these inequalities is strict.
      Similarly for the other three cases we see that requiring a strict
      order homomorphism implies monotonicity conditions on the endpoints of
      the intervals.

  \item[$(\Longrightarrow)$] Now we assume that $r_*$ and $r^*$ are
      monotonic functions, and that $R_\sqsubseteq=[r_*,r^*]$ is an
      interval rank function. We must show that each of the four ways in
      which $r_*$ and $r^*$ are both monotonic we have a unique interval
      order. This time, let us assume that $r_*$ is antitone and $r^*$ is
      isotone (so we are in case (iii)). If $a < b \in \po$ we know that
      \[ r_*(a) \geq r_*(b) \quad\text{ and }\quad r^*(a) \leq r^*(b).\]
      This is exactly $R_\sqsubseteq(a) \subseteq R_\sqsubseteq(b)$.
	But of course we assumed $R_\sqsubseteq$ was an interval rank
function, so $R_\sqsubseteq(a) \subset R_\sqsubseteq(b)$. Since $a$ and $b$
      were chosen arbitrarily we know that $R_\sqsubseteq$ must be a strict
      order homomorphism into $\ints$ with the $\subseteq$ ordering. So
      $\sqsubseteq=\subseteq$ as desired. The other three cases follow
      similarly.
\end{description}
\end{proof}

Proposition \ref{EndpointMonotone} applies to the four interval orders,
$\le_W, \ge_W, \subseteq, \supseteq$, as there are only four ways for both
endpoints to be monotone, and each corresponds to one of these four interval
orders. The rank function for the strong interval order $\le_S$ is derived
easily as a case of the weak order $\le_W$.

\begin{cor} \label{corstrong}
Let $R_\sqsubseteq(a) = [r_*(a),r^*(a)]:\po \rightarrow \ints$ be an interval
rank function for interval order $\le_S$ (resp. $\ge_S$). Then, $r_*(a)$ and
$r^*(a)$ are both isotone (resp. antitone).
\end{cor}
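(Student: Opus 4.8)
The plan is to reduce Corollary \ref{corstrong} to Proposition \ref{EndpointMonotone}, which we are allowed to assume, by exploiting the relationship between the strong interval order $\le_S$ and the weak interval order $\le_W$ noted just after Definition \ref{weak}: namely that $\bar{x} \le_S \bar{y}$ implies $\bar{x} \le_W \bar{y}$. The key observation is that any interval rank function for $\le_S$ is automatically an interval rank function for $\le_W$, so the endpoint monotonicity established in case (i) of Proposition \ref{EndpointMonotone} transfers directly.

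First I would let $R_{\le_S}$ be an interval rank function for the strong order, so that for all $a <b \in P$ we have $R_{\le_S}(a) <_S R_{\le_S}(b)$ by Definition \ref{IntRank}. Next I would invoke the implication $<_S \,\Rightarrow\, <_W$ to conclude that $R_{\le_S}(a) <_W R_{\le_S}(b)$ for all $a < b$, which is exactly the statement that $R_{\le_S}$ is a strict order homomorphism $\po \mapsto \tup{\ints,\le_W}$, i.e., an interval rank function for $\le_W$. At this point the hypothesis of Proposition \ref{EndpointMonotone}(i) is satisfied, and I would simply apply that result to conclude that $r_*$ and $r^*$ are both isotone. The $\ge_S$ case is then handled by the dual argument, using that $\bar{x} \ge_S \bar{y}$ implies $\bar{x} \ge_W \bar{y}$ together with part (ii) of the proposition.

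I do not expect any real obstacle here, as this is a genuine corollary rather than an independent result: the entire content is the order-implication $\le_S \,\subseteq\, \le_W$, which lets us borrow the already-proven weak-order endpoint monotonicity. The one point requiring slight care is to confirm that the strict version of the implication holds, i.e., that $\bar{x} <_S \bar{y}$ (not merely $\le_S$) yields $\bar{x} <_W \bar{y}$; this follows immediately from Definitions \ref{strong} and \ref{weak}, since $x^* < y_*$ forces both $x_* < y_*$ and $x^* < y^*$ whenever the intervals are nondegenerate, and in any case $\bar{x} \ne \bar{y}$. The only asymmetry worth flagging is that the converse of this corollary fails: $r_*$ and $r^*$ being isotone characterizes $\le_W$, not $\le_S$, which is precisely why this is stated as a one-directional corollary rather than an ``iff'' in the style of Proposition \ref{EndpointMonotone}.
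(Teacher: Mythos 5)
Your proposal is correct and follows essentially the same route as the paper's own proof: both reduce the corollary to Proposition \ref{EndpointMonotone}(i) (and (ii) for the dual) via the observation that $\le_S$ implies $\le_W$, so an interval rank function for the strong order is automatically one for the weak order. Your extra remarks---that the strict implication $<_S\,\Rightarrow\,<_W$ is what is actually needed, and that the converse fails so the statement cannot be an ``iff''---match the paper's own discussion immediately following the corollary.
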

\begin{proof}
We have previously observed that if $\overline{x} \le_S \overline{y}$ then
$\overline{x} \le_W \overline{y}$, i.e., the strong interval order implies
the weak interval order. Therefore if $R_\sqsubseteq$ is an interval rank
function for $\le_S$ then it is also an interval rank function for $\le_W$.
So by Proposition \ref{EndpointMonotone}(i) we know that $r_*(a)$ and
$r^*(a)$ are both isotone. And if $\sqsubseteq=\ge_S$ then $r_*(a)$ and
$r^*(a)$ are both antitone (Prop \ref{EndpointMonotone}(ii)).
\end{proof}

Since $\le_S$ implies $\le_W$, but not {\it vice versa}, Corollary \ref{corstrong} is not a
bi-implication, like Proposition \ref{EndpointMonotone}.
\fig{antiStrongCounter} shows a partial order adorned with an interval-valued
function with both endpoints antitone on each of its elements which holds for
$\le_W$, but not $\le_S$.

\begin{figure}[h]
\centering{
  \includegraphics[scale=0.75]{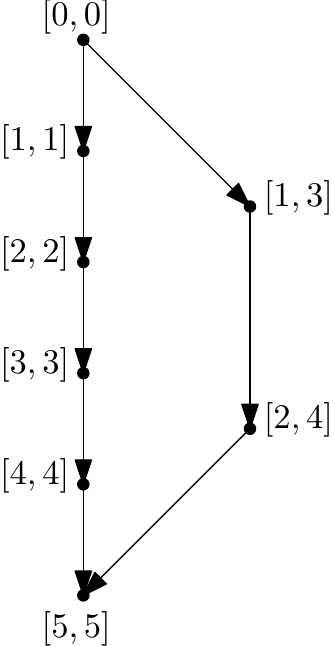}
  \caption{A poset with interval valued strict antitone function which is an
  interval rank function for the weak order, but not for the strong order.}
  \label{antiStrongCounter}
}
\end{figure}

We noted that strict interval orders play a critical role in Def.\
\ref{IntRank} to avoid degeneracies like $R(a)=[r,r]$. But even though the
interval ranks are strictly monotonic, and from Prop. \ref{EndpointMonotone}
their endpoints are monotonic, further degeneracies are possible when even
{\em one} of the two endpoints is not strictly monotonic, i.e.\ for
situations where $a < b, R(a) = [r,r]$, and $R(b) = [r,s]$, while $r < s$.
This is also semantically fraught, as we will usually wish for both endpoints
to be clearly distinguishable when an element changes. We thus have a special
interest in the case where the endpoints are also strictly monotonic.

\begin{defn}[(Strict Interval Rank Function)]
An interval rank function $R_\sqsubseteq \in {\bf R}_\sqsubseteq(\poset)$ is
{\bf strict} if $r_*$ and $r^*$ are strictly monotonic. Let ${\bf
S}_\sqsubseteq(\poset)$ be the set of all strict interval rank functions
$R_\sqsubseteq$ for the interval order $\sqsubseteq$ on a poset $\poset$.
\end{defn}

Requiring strict monotonicity of the endpoints $r_*$ and $r^*$ allows us to
strengthen Proposition \ref{EndpointMonotone} in the case where $\sqsubseteq
\in \{\leq_W, \geq_W, \subseteq, \supseteq\}$.

\begin{prop}\label{EndpointMonotoneStrict}
	Let $F : P \rightarrow \overline{\N}$ be an interval-valued
function on poset $\tup{\poset,\leq}$, so that $F(a) = [F_*(a),F^*(a)] \in
\overline{\N}$. Then, $F_*$ and $F^*$ are strictly monotonic functions iff
$F$ is a strict interval rank function for $\leq_W,\geq_W,\subseteq,$ or
$\supseteq$. In particular:

	\begin{enumerate}[(i)]
  \item $F_*$ and $F^*$ are both strictly isotone iff $\sqsubseteq= \le_W$.
  \item $F_*$ and $F^*$ are both strictly antitone iff $\sqsubseteq=
      \ge_W$.
  \item $F_*$ is strictly antitone and $F^*$ is strictly isotone iff
      $\sqsubseteq = \subseteq$.
  \item $F_*$ is strictly isotone and $F^*$ is strictly antitone iff
      $\sqsubseteq = \supseteq$.
\end{enumerate}
\end{prop}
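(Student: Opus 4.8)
The plan is to mirror the two-part structure of the proof of Proposition \ref{EndpointMonotone}, inserting strictness at each step, and to lean on that earlier result to fix the monotonicity \emph{directions} once strictness is in hand. The biconditional to be proved splits as: (A) $F_*$ and $F^*$ are strictly monotonic; (B) $F$ is a strict interval rank function for exactly one of $\le_W,\ge_W,\subseteq,\supseteq$. I would prove (A)$\iff$(B) and extract (i)--(iv) along the way, since the four sign patterns of the endpoints correspond bijectively to the four orders.

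For the $(\Longleftarrow)$ direction (assume $F$ is a strict interval rank function) the argument is almost immediate from the definitions. By definition of \emph{strict}, the endpoints $F_*$ and $F^*$ are strictly monotonic, which is exactly (A). To obtain the refined correspondences (i)--(iv), I would observe that a strict interval rank function is in particular an interval rank function, so Proposition \ref{EndpointMonotone} applies and pins down the direction (isotone vs.\ antitone) of each endpoint for the given order; combining that direction with strict monotonicity yields the strict isotone/antitone patterns claimed.

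For the $(\Longrightarrow)$ direction (assume $F_*,F^*$ strictly monotonic) I would enumerate the four possible sign patterns of a pair of strictly monotonic functions (both strictly isotone; both strictly antitone; $F_*$ strictly antitone with $F^*$ strictly isotone; and the reverse) and, for each, verify that $a<b$ forces $F(a)\sqsubset F(b)$ in the strict version of the matching order. For instance, if both endpoints are strictly isotone then $a<b$ gives $F_*(a)<F_*(b)$ and $F^*(a)<F^*(b)$, so $F(a)\le_W F(b)$ with $F(a)\ne F(b)$, i.e.\ $F(a)<_W F(b)$; the $\subseteq$ case uses $F_*$ strictly antitone and $F^*$ strictly isotone to yield proper containment $F(a)\subset F(b)$, and the remaining two cases follow by the same bookkeeping. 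This shows $F$ is an interval rank function for that order, and since the endpoints were assumed strictly monotonic it is in fact strict, establishing (B). Because the four sign patterns are exhaustive and pairwise distinct, they partition bijectively onto the four orders, which also delivers the ``exactly one'' clause and the uniqueness implicit in (i)--(iv).

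The main obstacle is not conceptual but a matter of care at the interval level: I must confirm in the $\subseteq$ and $\supseteq$ cases that two strict endpoint inequalities of \emph{opposite} sense yield \emph{proper} containment, and in the $\le_W,\ge_W$ cases that two strict inequalities of the \emph{same} sense give a \emph{strict} weak relation, so that in every case we land on the strict relation $\sqsubset$ demanded by Definition \ref{IntRank} rather than stopping at the reflexive $\sqsubseteq$. Once that check is made for each of the four patterns, the proposition follows.
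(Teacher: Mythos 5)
Your proposal is correct and follows essentially the same route as the paper: the $(\Longleftarrow)$ direction is immediate from the definition of a strict interval rank function (with the directional refinements read off from Proposition \ref{EndpointMonotone}), and the $(\Longrightarrow)$ direction enumerates the four strict sign patterns of the endpoints and checks that each forces the strict relation in the corresponding order, exactly as the paper does for case (i) with the rest following analogously.
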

\begin{proof}\mbox{}
\begin{description}
  \item[$(\Longleftarrow)$] If we assume that $F$ is a strict interval rank
      function then $F_*$ and $F^*$ are strictly monotonic by definition.
      So this direction is trivial.

  \item[$(\Longrightarrow)$] Now, we assume that $F_*$ and $F^*$ are
      strictly monotonic functions on $\poset$ and we must show that $F$ is
      necessarily a strict interval rank function for the appropriate
      $\leq_W,\geq_W,\subseteq,\supseteq$. As in the proof for Proposition
      \ref{EndpointMonotone} we will show only for one of the four cases
      and note that the rest follow similarly. This time we will prove (i).
      Let $F(a) = [F_*(a),F^*(a)]$ be an interval-valued function such that
      $F_*$ and $F^*$ are both strictly isotone on $\poset$. We must show
      that if $a<b \in P$ then $F(a) <_W F(b)$. Because of the strict
      endpoint conditions we know that
      \[ F_*(a) < F_*(b) \text{ and } F^*(a) < F^*(b).\]
      This implies the desired $F(a) <_W F(b)$.
\end{description}
\end{proof}
We note that Proposition \ref{EndpointMonotoneStrict} is stronger than
Proposition \ref{EndpointMonotone} because we do not assume that $F$ is an
interval rank function, only that it is an interval-valued function.

\subsection{Standard Interval Rank}

Within the space of possible strict interval rank functions for $\ge_W$, one
stands out as especially significant.

\begin{defn}[(Standard Interval Rank)]
Let
\[ R^+(a) \define  [ \height(\up a)-1, \height - \height(\down a) ]
\in
\overline{\height-1} \]
be called the {\bf standard interval rank function}. For convenience we also
denote $R^+(a) = \left[ r^t(a), r^b(a) \right]$, where $r^t(a) \define
\height(\up a)-1$ is called the {\bf top rank} and $r^b(a) \define \height -
\height(\down a)$ is called the {\bf bottom rank}.
\end{defn}

$R^+(a)$ is a strict interval rank function for the weak order, although
actually it reverses to $\le_W$, so that its endpoints range naturally and
appropriately from $0$ to $\height-1$ as $a$ ranges from $\top$ to $\bot$.
Moreover, it is also the largest possible strict interval rank function for
the weak order whose image is in $\overline{\height-1}$.

\begin{prop}\label{standard}

For a finite bounded poset $\poset$,

\begin{enumerate}[(i)]
  \item $R^+ \in {\bf S}_{\ge_W}(\poset)$ is a strict interval rank
      function for the weak interval order $\ge_W$;
  \item $R^+(\top) = [0,0];$ $R^+(\bot) = [ \height-1, \height-1]$;
  \item $R^+$ is maximal w.r.t. $\subseteq$ in the sense that $\forall R
      \in {\bf S}_{\ge_W}(\poset)$ with $R(\poset) \subseteq
      \overline{\height-1}$, $\forall a \in P, R(a) \sub R^+(a)$.
      \label{stdIntRankMaxSub}
\end{enumerate}

\end{prop}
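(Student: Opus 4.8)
The plan is to dispatch (i) and (ii) by direct reduction to facts already in hand, and reserve the real work for the extremality claim (iii). For (i), I would write $R^+ = [r^t, r^b]$ with $r^t(a) = \height(\up a) - 1$ and $r^b(a) = \height - \height(\down a)$, and invoke Proposition \ref{EndpointMonotoneStrict}(ii): it suffices to check that both endpoints are strictly antitone, whereupon $R^+$ is automatically a strict interval rank function for $\ge_W$. The endpoint $r^t$ inherits strict antitonicity directly from $\height(\up \cdot)$ (the additive shift by $-1$ is harmless), while $r^b$ is strictly antitone because $\height(\down \cdot)$ is strictly isotone (negation reverses the sense and the constant $\height$ is harmless); both monotonicity facts are exactly the strict morphism properties recorded at the end of Section \ref{OrderedSetsDefns}. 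Before applying the proposition I would also confirm that $R^+(a)$ is a legitimate interval, i.e.\ $r^t(a) \le r^b(a)$: this is equivalent to $\height(\up a) + \height(\down a) - 1 \le \height$, which is precisely $S(a) \le \height$, true since the hourglass height cannot exceed the global height. The same inequalities together with $\height(\up a), \height(\down a) \ge 1$ show $R^+(a) \in \overline{\height-1}$. Part (ii) is then a direct computation from boundedness: $\up \top = \{\top\}$ and $\down \top = P$ give $R^+(\top) = [1-1, \height - \height] = [0,0]$, and dually $\up \bot = P$, $\down \bot = \{\bot\}$ give $R^+(\bot) = [\height-1, \height-1]$.

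For (iii), fix any $R \in {\bf S}_{\ge_W}(\poset)$ with $R(\poset) \subseteq \overline{\height-1}$ and write $R(a) = [r_*(a), r^*(a)]$; by Proposition \ref{EndpointMonotoneStrict}(ii) both $r_*$ and $r^*$ are strictly antitone and integer-valued. Unwinding the subset order, the target $R(a) \subseteq R^+(a)$ is the pair of inequalities $r_*(a) \ge r^t(a)$ and $r^*(a) \le r^b(a)$, which I would prove separately by chain-counting. For the lower bound I would take a longest chain in $\up a = [a,\top]$, necessarily of the form $a = c_0 < c_1 < \cdots < c_k = \top$ with $k = \height(\up a) - 1$ (it must contain the extrema $a$ and $\top$, else it could be enlarged); strict antitonicity plus integrality forces $r_*$ to drop by at least one at each step, so $r_*(a) \ge r_*(\top) + (\height(\up a) - 1)$, and $r_*(\top) \ge 0$ then yields $r_*(a) \ge \height(\up a) - 1 = r^t(a)$. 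Dually, a longest chain in $\down a = [\bot,a]$ is $\bot = d_0 < \cdots < d_m = a$ with $m = \height(\down a) - 1$; since $r^*$ increases by at least one at each downward step, $r^*(\bot) \ge r^*(a) + (\height(\down a) - 1)$, and $r^*(\bot) \le \height - 1$ gives $r^*(a) \le \height - \height(\down a) = r^b(a)$.

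The main obstacle is exactly this last argument, where the sharpness of the bounds relies on combining three ingredients: choosing the chains that realize $\height(\up a)$ and $\height(\down a)$, exploiting that each strict step of an integer-valued monotone map moves the value by at least one full unit (so a chain with $\height(\up a)$ vertices forces a total change of at least $\height(\up a) - 1$, which is where the ``$-1$'' offset in $r^t$ originates), and anchoring the telescoped inequalities at $\top$ and $\bot$ through the hypothesis $R(\poset) \subseteq \overline{\height-1}$. Each endpoint inequality is individually short, but what makes $R^+$ \emph{maximal} rather than merely some upper bound is that these bounds meet the endpoints of $R^+$ exactly; I would therefore be careful to flag that without the integrality of the endpoints the unit drops per step, and hence the ``$-1$'' offsets, would not be forced.
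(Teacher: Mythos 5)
Your proposal is correct and follows essentially the same route as the paper: parts (i) and (ii) via Proposition \ref{EndpointMonotoneStrict} and direct computation, and part (iii) by telescoping the strictly antitone integer-valued endpoints along maximum chains in $\up a$ and $\down a$. The only (harmless) streamlining is that you anchor the telescoped inequalities with the one-sided bounds $r_*(\top) \ge 0$ and $r^*(\bot) \le \height-1$ taken directly from the codomain hypothesis, where the paper first pins down $R(\top)=[0,0]$ and $R(\bot)=[\height-1,\height-1]$ exactly using a chain of length $\height$.
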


\begin{proof}
\mbox{}
\begin{enumerate}[(i)]
\item We first show that $r^t \leq r^b$ so that $R^+$ is an interval-valued
    function. Indeed, $\forall a \in P$ we have that
    \[ \height(\down a) + \height(\up a) \leq \height([0,1])+1 = \height+1 \]
    with equality iff $a \in I(\poset)$ is a spindle element. Rearranging we
    see that $\height(\up a) -1 \leq \height - \height(\down a)$ which is
    precisely $r^t \leq r^b$. Then, it is evident from $\height(\upa)$
    being strictly antitone, $\height(\dna)$ being strictly isotone, and
    negation being order reversing that $r^t,r^b$ are both strictly
    antitone. Therefore, by Prop. \ref{EndpointMonotoneStrict} we have that
    $R^+$ is a strict interval rank function for $\geq_W$.

\item Follows from $\height( \up \top ) = \height( \down \bot ) = 1$ and
    $\height( \down \top ) = \height( \up \bot ) = \height$. \label{one}

\item \label{three} Since any $R \in {\bf S}_{\ge_W}(\poset)$ is a strict
    interval rank function for $\ge_W$ we know that if $a<b \in P$, $R(a)
    >_W R(b)$. Moreover we know
    \[ r_*(a) > r_*(b) \text{ and } r^*(a) > r^*(b) \]
    so that $r_*$ and $r^*$ are strictly antitone. In addition, we are
    restricting to the case where $0 \leq r_*\leq r^* \leq \height-1$.
    Under these assumptions we must show that $\forall a \in P$
    \begin{align*}
    r_*(a) &\geq r^t(a) = \height(\up a)-1\\
    r^*(a) &\leq r^b(a) = \height-\height(\down a).
    \end{align*}

    First notice that, by definition of $\height$, there must be a chain
    $C_{\height} \subseteq P$ of length $\height$ with greatest element
    $\top$ and least element $\bot$. Since $R(\poset) \subseteq
    \overline{\height-1}$ and $r_*, r^*$ are strictly antitone we must have
    $R(\top) = [0,0]$ and $R(\bot) = [\height-1,\height-1]$.

    Now, let $a \in P$, by definition of $\height(\cdot)$ we know that
    there must be a chain $C \subseteq P$ of length $\height(\up a)$ with
    greatest element $\top$ and least element $a$. We already showed that
    $r_*(\top) = 0$. Then, in order for $r_*$ to be strictly antitone we
    need $\forall c_1 < c_2 \in C$, $r_*(c_1) > r_*(c_2)$. Therefore
    $r_*(c)$ must be at least the chain distance from $\top$ to $c$ along
    $C$, less one, for all $c \in C$. In particular, $r_*(a) \geq
    \height(\up a) -1 = r^t(a)$.

    Dually, there must be a chain $D \subseteq P$ of length $\height(\down
    a)$ with greatest element $a$ and least element $\bot$. We already know
    $r^*(\bot)=\height-1$. In order for $r^*$ to be strictly antitone it
    must be true that $\forall d_1 < d_2 \in D$, $r^*(d_1) > r^*(d_2)$.
    Therefore, $r^*(d)$ must be at most $\height-1$ minus the chain
    distance from $\bot$ to $d$ along $D$, less one, for all $d \in D$. In
    particular
    \[ r^*(a) \leq \height-1 - (\height(\down a)-1) = \height-\height(\down a) = r^b(a).\]

\end{enumerate}

\end{proof}

$R^+$ motivates its use in applications by being the largest, most
conservative, strict interval rank function for the natural weak order. But
there are plenty smaller. Consider \fig{counter1}, illustrating the possible
strict interval rank functions for the weak order on the poset known as
$N_5$. With spindle $\bot \cover C \cover A \cover \top$, we must have (see
Prop. \ref{prop1}) $R(\top) = [0,0]$, $R(A) = [1,1]$, $R(C) = [2,2]$,
$R(\bot) = [3,3]$ for all $R \in {\bf R}$.  However, for $B$ there are 3
possible values for a strict interval rank for $\ge_W$. We have $R^+(B) =
[1,2]$ for the standard interval rank, but we could also have $R(B) = [1,1]$
or $[2,2]$, and both of these possibilities are subsets of the standard
interval rank $[1,2]$.

\mytikz{\pFivePossibleRanks}{>=latex, line width=0.75pt}{counter1}{All strict
interval ranks on $N_5$.}

Indeed, in \fig{counter1}, all three possible $R$, including $R^+$, reverse
to the strong interval order $\le_S$, but in general, this is not true
(recall \fig{antiStrongCounter}). It is possible to construct interval ranks
which reverse to $\le_S$, but at the price of being smaller than those that
reverse to $\le_W$, and in particular smaller than $R^+$. For example,
trivial assignments like $\forall a \in P, r(a) = r^t(a)$ or $r(a) = r^b(a)$
do this.

\fig{allexrank2} shows our example from \fig{allex2} but now laid out showing
standard interval rank $R^+(a)$ and with each element centered at the midpoint
$\widehat{R^+(a)}$. Table \ref{ranktable} shows the standard interval rank
quantities details including many of the various other quantities discussed
for our example in \fig{allex2}. The quantities $\tilde{r}_*$, $\tilde{r}^*$,
and $r^f(a)$ will be introduced next.

\mytikz{\IntRankMidpointLayout}{>=latex, line
width=0.75pt}{allexrank2}{Example from \fig{allex2} showing standard interval
rank $R^+(a)$, with each element aligned at the midpoint $\widehat{R^+}(a)$. The
spindle chain is shown in bold, while non-precise elements have a bar attaching
their midpoints to a line indicating the range of the standard interval rank.
}

\begin{table}[h]
\begin{center}
\begin{tabular}{l||c|c|c|c|c|c||c|c|c|c}
      &                  &                      & $\Rpl(a) $     &       &        &                    &              &              &           \\%&  $\RplConj(a) $             \\
$a$	  & $\height(\up a)$ & $\height(\down a)$   & $=[ r^t,r^b ]$ & $W(a)$& $S(a)$ & $\widehat{R^+(a)}$ &$\tilde{r}_*(a)$ &$\tilde{r}^*(a)$ & $r^f(a)$  \\ \hline%& $= [\rConjT(a),\rConjB(a)]$ \\ \hline
$\top$&	1                &      5               &   [0,0]        &  0    & 5      & 0.0	               &  0           & 8            & 9         \\%&    [0,8]                    \\
$K$   &	2                &      4               &   [1,1]        &  0    & 5      & 1.0	               &  1           & 7            & 7         \\%&    [1,4]                    \\
$C$   &	2                &      3               &   [1,2]        &  1    & 4      & 1.5	               &  2           & 6            & 6         \\%&    [1,6]                    \\
$B$   &	2                &      2               &   [1,3]        &  2    & 3      & 2.0	               &  3           & 5            & 5         \\%&    [1,5]                    \\
$H$   &	3                &      3               &   [2,2]        &  0    & 5      & 2.0	               &  4           & 4            & 5         \\%&    [2,6]                    \\
$J$   &	3                &      2               &   [2,3]        &  1    & 4      & 2.5	               &  5           & 3            & 4         \\%&    [2,5]                    \\
$E$   &	3                &      2               &   [2,3]        &  1    & 4      & 2.5	               &  6           & 2            & 4         \\%&    [2,5]                    \\
$A$   &	4                &      2               &   [3,3]        &  0    & 5      & 3.0	               &  7           & 1            & 3         \\%&    [3,5]                    \\
$\bot$&	5                &      1               &   [4,4]        &  0    & 5      & 4.0	               &  8           & 0            & 1         %&    [4,4]                    \\
\end{tabular}
\caption{Interval rank statistics for the example in
\fig{allex2}.}
\label{ranktable}
\end{center}
\end{table}

While we have not found our sense of interval-valued rank present in the
literature, it is related to some others. First, we note Freese's definition
\cite{FrR04} of a rank function
	$r^f(a) \define \height + \height(\down a) - \height( \up a)$. Although
$r^f$ is isotone, it is strictly so, so that $-r^f$ is strictly antitone.
Beyond that, the relation to our standard interval rank $R^+$ is
straightforward, if inelegant:
%\[ r^t(a) + r^b(a) + r^f(a) = 2 \widehat{R^+}(a) + r^f(a) = 2 \height - 1. \]
\[ r^t(a) + r^b(a) + r^f(a) = 2 \height - 1. \]

Wild \cite{WiM05} calls $\height([\bot,a])$ the {\bf natural rank} of $a \in
P$, although his posets are pointed to be oriented to $\bot \in P$. And
Schr\"oder \cite{ScB03} introduces a concept we will call {\bf procedural
rank}, since it is based on a recursive algorithm, rather than a closed-form
equation.

\begin{defn}[(Procedural Rank)] \cite{ScB03} \label{posetrank}
Assume a general (possibly unbounded) poset $\poset = \tup{ P, \le }$. Then
for any element $a \in P$, define its procedural rank
recursively as%
\footnote{Schr\"oder's definition is actually defined in the dual for
$\Minnn$, but is translated here as $\Maxxx$ because we point our posets the
opposite way. The results are identical.}
	\({posetrankeq} \tilde{r}_*(a) \define \halffunc{
		0,	& a \in \Maxxx(P)	\\
		k, & a \in \Maxxx \left(
			P \setminus \{ b \in P \st \tilde{r}_*(b) < k \} \right)
	}.	\)
\end{defn}

So procedural rank is determined by recursively ``slicing off'' the maximal
elements of a poset, incrementing the rank counter as we go. This sense of rank
as an element attribute is actually equivalent to our top rank.

\begin{prop} \label{maxrank}
For a finite bounded poset $\poset$, $\tilde{r}_* = r^t$.
\end{prop}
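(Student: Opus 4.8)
The plan is to show that $\tilde r_*$ and $r^t$ satisfy one and the same recursion over the covering relation, with the same boundary value at $\top$, and then invoke finiteness of $P$ to conclude they agree everywhere. First I would extract a closed recursive characterization of the procedural rank directly from Definition \ref{posetrank}. Reading the slicing process stage by stage, an element $a$ is ``sliced off'' at stage $k$ precisely when every element strictly above it has already been removed (so $\tilde r_*(b) \le k-1$ for all $b > a$) while $a$ itself has survived to stage $k$ (so some $b > a$ still remained at stage $k-1$, forcing $\max_{b>a}\tilde r_*(b)=k-1$). Unwinding this yields
\[ \tilde r_*(a) = \begin{cases} 0 & a \in \Maxxx(P) \\ 1 + \max_{b > a} \tilde r_*(b) & \text{otherwise.}\end{cases} \]
A consequence worth recording immediately is strict antitonicity: since any $b > a$ is among the elements over which the maximum is taken, $\tilde r_*(a) \ge \tilde r_*(b)+1$, so $a < b$ implies $\tilde r_*(a) > \tilde r_*(b)$.

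Next I would observe that $r^t$ obeys the very same recursion. Because $\poset$ is bounded, $\Maxxx(P) = \{\top\}$ and $r^t(\top) = \height(\up\top)-1 = 0$, matching the base case. For non-maximal $a$, the longest chain in $\up a = [a,\top]$ starts at its minimum $a$ and passes through a cover, so $\height(\up a) = 1 + \max_{a \cover b}\height(\up b)$, i.e.\ $r^t(a) = 1 + \max_{a \cover b} r^t(b)$. Using the strict antitonicity of $r^t$ already noted in the preliminaries (as $\height(\up\cdot)$ is strictly antitone), the maximum over covers equals the maximum over all $b > a$: covers are among the $b > a$, and conversely every $b > a$ lies above a cover $c$ of $a$ with $r^t(c) \ge r^t(b)$. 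Hence $r^t(a) = 1 + \max_{b>a} r^t(b)$, the identical recursion with the identical boundary value.

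Finally I would close by induction on $\height(\up a)$ (equivalently, on the length of the longest up-chain from $a$), which is legitimate since $P$ is finite. Both functions agree at $\top$, and the shared recursion expresses the value at $a$ in terms of values at strictly higher elements, each of which has a strictly smaller up-set and is therefore covered by the inductive hypothesis. Hence $\tilde r_* = r^t$ on all of $P$.

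The main obstacle is the first step: the procedural definition is an implicit, stage-indexed fixed point (``$a \in \Maxxx\left(P \setminus \{b : \tilde r_*(b) < k\}\right)$''), and care is needed to argue both that $a$ is removed no earlier and no later than the stage one past the largest rank appearing strictly above it, so that its value is \emph{exactly} $1 + \max_{b>a}\tilde r_*(b)$ rather than merely bounded by it. Once that equivalence is pinned down, matching it against the height recursion for $r^t$ is routine.
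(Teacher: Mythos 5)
Your proof is correct, but it takes a different route from the paper's. The paper disposes of this proposition in one line by citing Schr\"oder's Prop.\ 2.4.4, which already characterizes the procedural rank of $p$ as the length of the longest chain having $p$ as its extreme element (dualized to match the paper's pointing); since $r^t(a) = \height(\up a) - 1$ is exactly that length, the identity is immediate. You instead give a self-contained argument: you unwind the stage-indexed slicing definition into the explicit recursion $\tilde r_*(a) = 1 + \max_{b>a}\tilde r_*(b)$ with base value $0$ at $\Maxxx(P)$, verify that $r^t$ satisfies the same recursion via $\height(\up a) = 1 + \max_{b > a}\height(\up b)$, and conclude by induction on $\height(\up a)$. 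Your handling of the delicate point --- that an element removed at stage $k$ must have \emph{all} elements above it of rank $\le k-1$ and \emph{some} element above it of rank exactly $k-1$, so that its rank is exactly one more than the maximum above it --- is sound, and in effect you have reproved Schr\"oder's proposition rather than invoked it. What your approach buys is independence from the external reference (and from the dualization footnote the paper needs to translate Schr\"oder's $\Minnn$-pointed statement); what the paper's approach buys is brevity. The detour through covers in your second step is slightly roundabout (prepending $a$ to a longest chain in $\up b$ for any $b > a$ gives $\height(\up a) \ge 1 + \height(\up b)$ directly), but this does not affect correctness.
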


\begin{proof}

Follows directly from Schr\"oder's Prop. 2.4.4 \cite[p.\ 35]{ScB03}, which
states that the (procedural) rank of an element $p \in P$ is the length of
the longest chain in $P$ that has $p$ as its largest element.

\end{proof}

Note that for Wild's natural rank $\height([\bot,a])$, the dual
$\height([a,\top])$ is obviously available as well, again motivating an
interval-valued sense of rank which would involve both. Similarly, we use
$\tilde{r}_*$ for procedural rank suggestively, as its dual function is
readily available as
	\[ \tilde{r}^*(a) \define \halffunc{
		0,	& a \in \Minnn(P)	\\
		k,	& a \in \Minnn \left( P \setminus \{ b \in P \st \tilde{r}^*(b) < k  \}
		\right) }.	\] Where $\tilde{r}_*(a) = 0$ for maximal elements $a \in
\Maxxx(P)$ and is antitone, $\tilde{r}^*(a) = 0$ for minimal elements $a \in
\Minnn(P)$ and is isotone. Thus seeking antitone functions, while avoiding
any one-sided senses of rank, an {\bf interval procedural rank} function can
be easily identified as \[\tilde{R}(a) \define [ \tilde{r}_*(a), \height -
\tilde{r}^*(a) - 1 ].\]

\begin{cor}
$\tilde{R} = R^+$.
\end{cor}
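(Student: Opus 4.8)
The plan is to prove the two endpoint equalities separately, since the intervals $\tilde{R}(a) = [\tilde{r}_*(a),\, \height - \tilde{r}^*(a) - 1]$ and $R^+(a) = [r^t(a), r^b(a)]$ coincide for every $a \in P$ exactly when their left endpoints agree and their right endpoints agree. First I would dispatch the left endpoints: the equality $\tilde{r}_*(a) = r^t(a)$ is precisely the content of Proposition \ref{maxrank}, so nothing further is needed there.

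The substance lies in the right endpoints, where I must show $\height - \tilde{r}^*(a) - 1 = r^b(a) = \height - \height(\down a)$; after cancellation this reduces to the single claim $\tilde{r}^*(a) = \height(\down a) - 1$. The key observation is that $\tilde{r}^*$ is defined by exactly the recursion of Definition \ref{posetrank}, but with $\Minnn$ in place of $\Maxxx$; equivalently, $\tilde{r}^*$ is the procedural rank $\tilde{r}_*$ computed in the order dual $\tup{P,\ge}$, in which the roles of $\Maxxx$ and $\Minnn$, and hence of up-sets and down-sets, are interchanged. Applying Proposition \ref{maxrank} to that dual poset then gives $\tilde{r}^*(a) = \height(\down a) - 1$, since the up-set of $a$ taken in $\tup{P,\ge}$ is exactly $\down a$, and $\height$ is invariant under passing to the dual because chains are preserved as sets.

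Rather than a genuine obstacle, the only thing to be careful about is making the dualization rigorous: one should confirm that $\tup{P,\ge}$ is again a finite bounded poset (with $\bot$ and $\top$ interchanged), so that the hypotheses of Proposition \ref{maxrank} are met, and that the recursion defining $\tilde{r}^*$ is genuinely the image of the $\tilde{r}_*$ recursion under the order-reversing identity bijection $P \to P$. Once these bookkeeping points are settled, substituting $\tilde{r}_*(a) = r^t(a)$ and $\height - \tilde{r}^*(a) - 1 = \height - \height(\down a) = r^b(a)$ into the definition of $\tilde{R}(a)$ yields $\tilde{R}(a) = [r^t(a), r^b(a)] = R^+(a)$, completing the proof.
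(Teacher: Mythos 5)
Your proposal is correct and follows essentially the same route as the paper, which likewise cites Proposition \ref{maxrank} for the left endpoint, its dual argument for $\tilde{r}^*$, and the definition of $r^b$ for the right endpoint. You simply spell out the dualization and the arithmetic $\height - (\height(\down a) - 1) - 1 = \height - \height(\down a) = r^b(a)$ more explicitly than the paper's one-line proof does.
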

\begin{proof}
Follows from Prop.\ \equ{maxrank} and its dual argument for $\tilde{r}^*$,
and the definition of $r^b$.
\end{proof}

The primary $r^*$ and dual $r_*$ procedural ranks, and Freese's rank
$r^f$, are also shown in Table \ref{ranktable}.

\section{Characterizing Interval Rank}\label{charIntRank}

We now consider a number of the properties and operations of standard
interval rank, including rank precision measured by the width of its standard
interval rank $W( R^+(a) )$; poset spindles as their maximally-long graded
regions; and interval-valued quantitative comparisons of interval ranks in
posets. In this section we heavily refer to our example, in order to
illustrate these attributes.

\subsection{Interval Rank Width, Spindles, and Graded Posets}

We want to describe an element $a \in P$ by the width $W( R^+(a) )$ of its
standard interval rank as a measure of its rank's precision. For this
subsection let
	\[ W(a) \define W( R^+(a) ) = r^t(a) - r^b(a).	\]

To begin with, $\forall a \in P, W(a) + S(a) = \height(\po)$, so for each
element $a \in P$ its interval rank width $W(a)$ and centrality $S(a)$ are
effectively alternate representations of the same concept. In particular, the
width of an element $a$ is minimal at $W(a) = 0$ when its centrality $S(a) =
\height$ is maximal.  Then the largest of the maximal chains of its
hourglass, $\chains(\Xi(a))$, is maximum in $\poset$, so that it contains a
spindle chain; i.e., $\chains(\Xi(a)) \int \spindle(\poset) \neq \emptyset$.
Width $W(a)$ grows to $\height-3$ as centrality $S(a)$ shrinks to $3$ for
$\chains(\Xi(a)) = \{ \bot \cover a \cover \top \}$.

\begin{prop} \label{prop1}
Let $\po=\tup{P,\leq}$ be a bounded poset such that $|P| \geq 2$. For an element
$a \in P$:
\begin{enumerate}[(i)]
  \item $W(a) + S(a) = \height$
  \item $W(a) \in [ 0, \maxx( 0, \height - 3 ) ]$
  \item \label{two} Minimum width elements sit on spindle chains: $W(a) = 0
      \iff a \in I(\poset)$.
  \item Maximum width elements sit on minimum length maximal chains: $W(a) =
      \height-3 \iff \bot \cover a \cover \top \in \chains(\poset)$
\end{enumerate}
\end{prop}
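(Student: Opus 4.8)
The plan is to make part (i) the engine of the whole proposition and then read off (ii)--(iv) by converting every statement about the width $W(a)$ into a statement about the centrality $S(a)$, for which there is a clean combinatorial reading. For (i) I would just substitute the definitions: computing the width of the interval $R^+(a) = [r^t(a),r^b(a)]$ gives $W(a) = r^b(a) - r^t(a) = (\height - \height(\down a)) - (\height(\up a) - 1)$, while $S(a) = \height(\up a) + \height(\down a) - 1$. Adding these, the terms $\height(\up a)$ and $\height(\down a)$ cancel and the sum is $\height$. This is a one-line algebraic check using only the definitions of $r^t$, $r^b$, and $S$.

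The remaining parts all rest on the rewritten identity $W(a) = \height - S(a)$ together with the fact, recorded just before the definition of $S$, that $S(a) = \height(\Xi(a))$ is the number of elements in the longest chain of $P$ through $a$. With that reading, $S(a)$ ranges between $3$ and $\height$. For (ii): the lower bound $S(a) \le \height$ is immediate (a chain through $a$ is a chain of $P$), giving $W(a) \ge 0$; and for any interior $a$ the chain $\bot < a < \top$ forces $S(a) \ge 3$, hence $W(a) \le \height - 3$, while that same chain forces $\height \ge 3$. For $a \in \{\bot,\top\}$ one has $S(a) = \height$, so $W(a) = 0$. Combining the two cases yields $W(a) \le \maxx(0,\height-3)$, with the $\maxx$ absorbing the degenerate situation in which $P$ has no interior element at all.

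Part (iii) is then immediate from $W(a) = \height - S(a)$: $W(a) = 0$ iff $S(a) = \height$, i.e.\ iff the longest chain through $a$ attains the global height. Here I would pause to certify that a chain realizing $S(a)$ can be taken to run from $\bot$ to $\top$ and to be saturated --- glue a longest chain of $\down a = [\bot,a]$ to a longest chain of $\up a = [a,\top]$ at $a$, noting covers in these sub-posets are covers in $P$ --- so that a length-$\height$ chain through $a$ is genuinely a maximal chain, i.e.\ a spindle chain; this is exactly $a \in I(\poset)$. For (iv), again $W(a) = \height-3$ iff $S(a) = 3$; for interior $a$ this means $\height(\up a) + \height(\down a) = 4$ with both summands at least $2$, forcing $\height(\down a) = \height(\up a) = 2$, i.e.\ $\bot \cover a$ and $a \cover \top$, and the saturated chain $\bot \cover a \cover \top$ is maximal since it contains both bounds and admits no refinement. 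The converse direction reverses this computation.

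The step I expect to need the most care is the boundary bookkeeping in (iv). The biconditional is clean whenever $\height \ge 4$, since then $\height - 3 \ge 1 > 0$, so any width-$(\height-3)$ element is automatically interior (recall $W(\bot) = W(\top) = 0$ by Prop.\ \ref{standard}(ii)) and the interior argument above applies verbatim. When $\height = 3$ the value $\height-3$ collapses to $0$ and (iv) degenerates into (iii); at that point I would state explicitly that the intended content of (iv) concerns \emph{positive} maximal width, and that for $\height \le 2$ there are no interior elements to discuss. The only genuinely substantive (non-routine) obstacle is therefore the chain-realizability claim flagged in (iii): showing that the value $S(a)$ is attained by an honest member of $\chains(\poset)$ rather than merely by some chain of the right cardinality, which is what lets me identify $S(a) = \height$ with spindle membership and $S(a) = 3$ with $\bot \cover a \cover \top \in \chains(\poset)$.
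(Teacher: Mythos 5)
Your proof is correct and follows essentially the same route as the paper: part (i) is the same one-line algebraic cancellation, and parts (ii)--(iv) are read off from the identity $W(a)=\height-S(a)$ together with the chain-gluing interpretation of $S(a)$, exactly as the paper does for (iv) and for the converse direction of (iii). The only divergences are cosmetic and mostly in your favor: the paper proves the forward direction of (iii) by a forcing argument (strict antitonicity plus the bounded range $\N_{\height-1}$ pins $R^+$ to point values along a spindle chain) rather than via $S(a)=\height$, its argument for (ii) is an informal discussion of extremal examples where yours gives a uniform bound, and your explicit treatment of the $\height=3$ degeneracy in (iv) is more careful than the paper's.
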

\begin{proof}
\mbox{}
\begin{enumerate}[(i)]
  \item Follows directly from the definitions of $W(a),S(a),r^t(a)$, and
      $r^b(a)$.
      \begin{align*}
      W(a) &= W(\Rpl(a)) = W([\height(\up a)-1, \height-\height(\down a)]) \\
           &= \height-\height(\down a) -(\height(\up a)-1)\\
           &= \height+1-(\height(\down a)+\height(\up a))\\
      S(a) &= \height(\down a)+\height(\up a)-1\\
      W(a) + S(a) &= \height
      \end{align*}
  \item From the definition of $R^+$, in principle $W(a) \le \height - 1$.
      But $W(a) = \height - 1$ would imply that $a=\bot=\top$, which we
      have explicitly discounted by requiring $|P| \ge 2$. The two element
      poset with height 2 has $W(a)=\height-2=0$, and is the only such
      poset with $W(a)=\height-2$. Then, $N_5$ is the simplest poset for
      which $\exists a \in P, W(a) > 0$ (see \fig{counter1}), and has
      elements with $W(a)=\height-3$.
  \item Consider a spindle chain $C \in \chains(\poset)$. Then we know that
      $|C| = \height = \height([\bot,\top])$, so that $C = a_0 \cover a_1
      \cover \ldots \cover a_{\height-2} \cover a_{\height-1}$, where $a_0
      = \bot, a_{\height-1}=\top$. So the range of $r^t$ and $r^b$ being
      $\N_{\height-1}$, together with the strict antitonicity of $r^t$ and
      $r^b$, forces $\Rpl(a_i) = [ i, i ]$. Thus for any $a \in I(\poset),
      W(a) = 0$. Conversely, $W(a) = 0 \implies \height = S(a)$ (by (i)),
      so that $\exists C \in \spindle(\poset), C \in \spindle( \Xi( a ) )$,
      and necessarily $a \in C, a \in I(\poset)$.
  \item By (i), $W(a) = \height - 3$ implies $\height(\down a) +
      \height(\up a) = 4$. It cannot be that $\height(\up a) = 1$ (resp.\
      $\height(\down a) = 1$), since then $a=\top$ (resp.\ $a=\bot$), for
      which we know $W(a)=0$. Thus $\height(\up a) = \height(\down a) = 2$,
      so that $a \cover \top$ and $\bot \cover a$.
\end{enumerate}
\end{proof}

Structures of interest in classical order theory are often JD, and thus
graded \cite{BiG40}. For example, distributive, semi-modular, geometric, and
Boolean lattices are all graded \cite{AiM79}. However, this strong property
is decidedly {\em not} common in ``real world'' posets encountered in the
kinds of ontologies, taxonomies, concept lattices, object-oriented models,
and related databases which we are interested in. In particular, in graded
posets, the entire poset is a spindle, and all elements are precisely ranked
spindle elements.

\begin{prop} \label{graded-prop}
A bounded poset $\poset$ is graded iff $\spindle(\poset) =
\chains(\poset)$ (which is equivalent to $I(\poset) = P$).
\end{prop}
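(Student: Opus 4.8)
The plan is to route the main equivalence through the Jordan--Dedekind condition, invoking Theorem~\ref{graded} (graded $\iff$ JD), so that it suffices to prove that $\poset$ is JD if and only if $\spindle(\poset) = \chains(\poset)$. The first thing I would record is a reformulation of the spindle condition: since $\spindle(\poset) \subseteq \chains(\poset)$ always, and every $C \in \spindle(\poset)$ has $|C| = \height$ whereas every maximal chain has $|C| \le \height$, the equality $\spindle(\poset) = \chains(\poset)$ holds exactly when \emph{every} maximal chain has size $\height$, i.e.\ when all maximal chains of $\poset$ have the same (maximum) size. I would also use the standing assumption that $\poset$ is bounded, so that the maximal chains are precisely the saturated chains running from $\bot$ to $\top$.

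With this reformulation, I would prove ``$\poset$ is JD $\iff$ all maximal chains have size $\height$'' in two directions. The forward direction is immediate: applying the JD condition to the pair $\bot \le \top$ says that all saturated chains from $\bot$ to $\top$, i.e.\ all maximal chains, have the same size, which must then be $\height$. For the converse I would use a concatenation argument: given $a \le b$ and two saturated chains $S_1, S_2$ from $a$ to $b$, choose (using finiteness and boundedness) a saturated chain $D$ from $\bot$ up to $a$ and a saturated chain $U$ from $b$ up to $\top$. Then $D \cup S_i \cup U$ is a saturated chain from $\bot$ to $\top$, hence a maximal chain, for $i = 1,2$; since all maximal chains have size $\height$ and the three pieces overlap only in the shared endpoints $a$ and $b$, comparing sizes ($|D|+|S_i|+|U|-2 = \height$) forces $|S_1| = |S_2|$, so $\poset$ is JD. Combined with Theorem~\ref{graded} this yields graded $\iff \spindle(\poset) = \chains(\poset)$. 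The technical point here that I would state explicitly, rather than leave implicit, is that $D \cup S_i \cup U$ really is saturated: no cover step is lost at the junctions, because $D$ ends with a cover into $a$, $S_i$ begins with a cover out of $a$, and symmetrically at $b$.

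Finally, for the parenthetical equivalence with $I(\poset) = P$, the forward direction $\spindle(\poset) = \chains(\poset) \implies I(\poset) = P$ is easy: every element of a bounded poset lies on some maximal chain, and under the hypothesis every maximal chain is a spindle chain, so every element lies in $I(\poset)$. The reverse direction, $I(\poset) = P \implies \spindle(\poset) = \chains(\poset)$, is the step I expect to be the main obstacle, and the one I would scrutinize most carefully. It asserts that no short maximal chain can exist even when every \emph{individual} element happens to lie on some maximum-length chain, and this is genuinely delicate: membership $a \in I(\poset)$ (equivalently $W(a)=0$, by Prop.~\ref{prop1}(\ref{two})) only guarantees a maximum chain \emph{through $a$}, not that a given maximal chain containing $a$ is itself maximum. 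Before committing to a proof I would therefore test this direction on small branching examples to confirm that it holds as stated, since the naive ``every element lies on a maximum chain, hence every maximal chain is maximum'' inference is exactly the gap that needs to be closed.
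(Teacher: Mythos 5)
Your proof of the main biconditional is correct, and it takes a genuinely different route from the paper's. The paper handles the forward direction exactly as you do (graded $\Rightarrow$ JD $\Rightarrow$ all maximal chains have size $\height$ $\Rightarrow$ $\spindle(\poset)=\chains(\poset)$), but its converse is a one-liner from the hypothesis $I(\poset)=P$: every element then has $W(a)=0$ by Prop.~\ref{prop1}(\ref{two}), and the common value $\rho(a)=r^t(a)=r^b(a)$ is asserted to be a rank function. Your concatenation argument (splicing saturated chains $D$, $S_i$, $U$ through $\bot$, $a$, $b$, $\top$, noting that covers in intervals are covers in $P$ so the spliced chain is saturated hence maximal, and comparing lengths) instead establishes JD directly from ``every maximal chain has size $\height$'' and then invokes Theorem~\ref{graded}. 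This is self-contained, and it uses the hypothesis $\spindle(\poset)=\chains(\poset)$ itself rather than the weaker condition $I(\poset)=P$.

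That distinction matters, because the suspicion you raise about the remaining direction is vindicated: $I(\poset)=P$ does \emph{not} imply $\spindle(\poset)=\chains(\poset)$, nor gradedness. Take $P=\{\bot,a,c,d,x,y,b,\top\}$ with covering relations $\bot \cover a \cover c \cover d \cover \top$, $\;\bot \cover x \cover y \cover b \cover \top$, and additionally $a \cover b$ (one checks $\up a \int \down b = \{a,b\}$, so this really is a cover). Here $\height=5$ and the two five-element chains are spindle chains whose union is all of $P$, so $I(\poset)=P$ and every element has $W(a)=0$; yet $\{\bot,a,b,\top\}$ is a maximal chain of size $4$, so $\spindle(\poset)\neq\chains(\poset)$, the poset is not JD, and it is not graded. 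Concretely, $\rho=r^t=r^b$ fails to be a rank function across the cover $a \cover b$, where it jumps from $3$ to $1$. So the gap you declined to close cannot be closed: the parenthetical equivalence in the statement is false as written, the paper's one-line converse (which argues from $I(\poset)=P$) is unsound, and your JD-based proof of ``graded $\iff \spindle(\poset)=\chains(\poset)$'' together with the easy implication $\spindle(\poset)=\chains(\poset) \Rightarrow I(\poset)=P$ is what should be retained.
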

\begin{proof}
If $\poset$ is graded, then it is JD, so that $\forall C,C' \in
\chains(\poset), |C|=|C'|$, and thus $\spindle(\poset) = \chains(\poset)$.
For the converse, if $\forall a \in P, a \in I(\poset)$, then from
Proposition \ref{prop1} part \ref{two}, $W(a)=0$, so that $\rho(a) = r^t(a) =
r^b(a)$ is a rank function.
\end{proof}

\subsection{Quantitative Interval Rank Comparison} \label{quant}

We are interested in considering the quantitative relationships between
interval ranks. We can do so for two elements $a,b \in P$ by subtracting
$R^+(a)$ from $R^+(b)$ using the separation $\|R^+(a),R^+(b)\|$. This gives
an interval-valued measure of the spread of the interval-valued ranks of the
elements, which is further valued by the width of that spread
$W(\|R^+(a),R^+(b)\|)$.

Referring back to Section \ref{intervals}, and identifying
%$\bar{\alpha} \define R^+(b) - R^+(a)$, then we have
\begin{align*}
\bar{\alpha} &= [ \alpha_*, \alpha^* ] \\
	&:= R^+(b) - R^+(a) \\
	&= [ r^t(b) - r^b(a), r^b(b) - r^t(a) ] \\
	&= [ -\height + \height(\down a) + \height(\up   b) - 1,
		   \height - \height(\up a )  - \height(\down b) + 1 ],
\end{align*}	
then from Table \ref{taborders}, we derive Table \ref{tabstandard} for the
case of standard rank intervals. Table \ref{tabex1} shows the quantitative
relationships for our example from \fig{allex2}.

\begin{table}

\begin{center}

\begin{tabular}{>{$}c<{$}|>{$}c<{$}}

& \bar{\alpha} = [ r^t(b) - r^b(a), r^b(b) - r^t(a) ] \\
\cline{2-2}
R^+(a) \bowtie R^+(b) & \|\bx,\by\| \\
\cline{2-2}
& W(\|\bx,\by\|) \\

\hline \hline \hline

R^+(a) <_S R^+(b) &	
	0 < r^t(b)-r^b(a) < r^b(b)-r^t(a) \\
\cline{2-2} r^t(a) < r^b(a) < r^t(b) < r^b(b) &
	[r^t(b)-r^b(a),r^b(b)-r^t(a)] \\
\cline{2-2}
&	r^b(b)-r^t(a) - r^t(b)+r^b(a) \\

\hline \hline

R^+(a) \circ_\le R^+(b) &
	r^t(b)-r^b(a) < 0 < r^b(a)-r^t(b) < r^b(b)-r^t(a) \\
\cline{2-2} r^t(a) < r^t(b) < r^b(a) < r^b(b) &
	[0, r^b(b)-r^t(a)] \\
\cline{2-2}
&	 r^b(b)-r^t(a)	\\

\hline \hline

R^+(a) \subset R^+(b) &	
	r^t(b)-r^b(a) < 0 < r^b(b)-r^t(a) \\
\cline{2-2} r^t(b) < r^t(a) < r^b(a) < r^b(b) &
	[0,\Maxxx(r^b(a)-r^t(b),r^b(b)-r^t(a))] \\
\cline{2-2}
&	\Maxxx(r^b(a)-r^t(b),r^b(b)-r^t(a)) \\

\end{tabular}

\caption{Relations among quantities of standard rank intervals.}

\label{tabstandard}

\end{center}

\end{table}

\begin{table}

\begin{center}

{\footnotesize

\setlength{\tabcolsep}{2pt}

\begin{tabular}{||>{$}c<{$}|>{$}c<{$}|||*{8}{>{$}c<{$}|>{$}c<{$}||}}
\hline
\hline
&a,R^+(a)&\multicolumn{2}{>{$}c<{$}||}{\top,[0,0]}&\multicolumn{2}{>{$}c<{$}||}{K,[1,1]}&\multicolumn{2}{>{$}c<{$}||}{C,[1,2]}&\multicolumn{2}{>{$}c<{$}||}{B,[1,3]}&\multicolumn{2}{>{$}c<{$}||}{H,[2,2]}&\multicolumn{2}{>{$}c<{$}||}{J,E,[2,3]}&\multicolumn{2}{>{$}c<{$}||}{A,[3,3]}&\multicolumn{2}{>{$}c<{$}||}{\bot,[4,4]}	\\
\hline
\multirow{2}{*}{$a$}&\multirow{2}{*}{$R^+(a)$}&\multicolumn{2}{c||}{$\bowtie$}&\multicolumn{2}{c||}{$\bowtie$}&\multicolumn{2}{c||}{$\bowtie$}&\multicolumn{2}{c||}{$\bowtie$}&\multicolumn{2}{c||}{$\bowtie$}&\multicolumn{2}{c||}{$\bowtie$}&\multicolumn{2}{c||}{$\bowtie$}&\multicolumn{2}{c||}{$\bowtie$}	\\
\cline{3-18}
&&||\cdot,\cdot||&W&||\cdot,\cdot||&W&||\cdot,\cdot||&W&||\cdot,\cdot||&W&||\cdot,\cdot||&W&||\cdot,\cdot||&W&||\cdot,\cdot||&W&||\cdot,\cdot||&W \\
\hline
\hline
\multirow{2}{*}{$\top$}&\multirow{2}{*}{$[0,0]$}&\multicolumn{2}{>{$}c<{$}||}{=}&\multicolumn{2}{>{$}c<{$}||}{\le_S}&\multicolumn{2}{>{$}c<{$}||}{\le_S}&\multicolumn{2}{>{$}c<{$}||}{\le_S}&\multicolumn{2}{>{$}c<{$}||}{\le_S}&\multicolumn{2}{>{$}c<{$}||}{\le_S}&\multicolumn{2}{>{$}c<{$}||}{\le_S}&\multicolumn{2}{>{$}c<{$}||}{\le_S}	\\
\cline{3-18}
&&[0,0]&0&[1,1]&0&[1,2]&1&[1,3]&2&[2,2]&0&[2,3]&1&[3,3]&0&[4,4]&0 \\
\hline
\multirow{2}{*}{$K$}&\multirow{2}{*}{$[1,1]$}&\multicolumn{2}{c||}{}&\multicolumn{2}{>{$}c<{$}||}{=}&\multicolumn{2}{c||}{$\sub$}&\multicolumn{2}{c||}{$\sub$}&\multicolumn{2}{>{$}c<{$}||}{\le_S}&\multicolumn{2}{>{$}c<{$}||}{\le_S}&\multicolumn{2}{>{$}c<{$}||}{\le_S}&\multicolumn{2}{>{$}c<{$}||}{\le_S}	\\
\cline{3-18}
&&&&[0,0]&0&[0,1]&1&[0,2]&2&[1,1]&0&[1,2]&1&[2,2]&0&[3,3]&0	\\
\hline
\multirow{2}{*}{$C$}&\multirow{2}{*}{$[1,2]$}&\multicolumn{2}{c||}{}&\multicolumn{2}{c||}{}&\multicolumn{2}{>{$}c<{$}||}{=}&\multicolumn{2}{c||}{$\sub$}&\multicolumn{2}{c||}{$\supseteq$}&\multicolumn{2}{c||}{$\circ_\le$}&\multicolumn{2}{>{$}c<{$}||}{\le_S}&\multicolumn{2}{>{$}c<{$}||}{\le_S}	\\
\cline{3-18}
&&&&&&[0,0]&0&[0,2]&2&[0,1]&1&[0,2]&2&[1,2]&1&[2,3]&1	\\
\hline
\multirow{2}{*}{$B$}&\multirow{2}{*}{$[1,3]$}&\multicolumn{2}{c||}{}&\multicolumn{2}{c||}{}&\multicolumn{2}{c||}{}&\multicolumn{2}{>{$}c<{$}||}{=}&\multicolumn{2}{c||}{$\supseteq$}&\multicolumn{2}{c||}{$\supseteq$}&\multicolumn{2}{c||}{$\supseteq$}&\multicolumn{2}{>{$}c<{$}||}{\le_S}	\\
\cline{3-18}
&&&&&&&&[0,0]&0&[0,1]&1&[0,2]&2&[0,2]&2&[1,3]&2	\\
\hline
\multirow{2}{*}{$H$}&\multirow{2}{*}{$[2,2]$}&\multicolumn{2}{c||}{}&\multicolumn{2}{c||}{}&\multicolumn{2}{c||}{}&\multicolumn{2}{c||}{}&\multicolumn{2}{>{$}c<{$}||}{=}&\multicolumn{2}{c||}{$\sub$}&\multicolumn{2}{>{$}c<{$}||}{\le_S}&\multicolumn{2}{>{$}c<{$}||}{\le_S}	\\
\cline{3-18}
&&&&&&&&&&[0,0]&0&[0,1]&1&[1,1]&0&[2,2]&0	\\
\hline
\multirow{2}{*}{$J,E$}&\multirow{2}{*}{$[2,3]$}&\multicolumn{2}{c||}{}&\multicolumn{2}{c||}{}&\multicolumn{2}{c||}{}&\multicolumn{2}{c||}{}&\multicolumn{2}{c||}{}&\multicolumn{2}{>{$}c<{$}||}{=}&\multicolumn{2}{c||}{$\supseteq$}&\multicolumn{2}{>{$}c<{$}||}{\le_S}	\\
\cline{3-18}
&&&&&&&&&&&&[0,0]&0&[0,1]&1&[1,2]&1	\\
\hline
\multirow{2}{*}{$A$}&\multirow{2}{*}{$[3,3]$}&\multicolumn{2}{c||}{}&\multicolumn{2}{c||}{}&\multicolumn{2}{c||}{}&\multicolumn{2}{c||}{}&\multicolumn{2}{c||}{}&\multicolumn{2}{c||}{}&\multicolumn{2}{>{$}c<{$}||}{=}&\multicolumn{2}{>{$}c<{$}||}{\le_S}	\\
\cline{3-18}
&&&&&&&&&&&&&&[0,0]&0&[1,2]&1	\\
\hline
\multirow{2}{*}{$\bot$}&\multirow{2}{*}{$[4,4]$}&\multicolumn{2}{c||}{}&\multicolumn{2}{c||}{}&\multicolumn{2}{c||}{}&\multicolumn{2}{c||}{}&\multicolumn{2}{c||}{}&\multicolumn{2}{c||}{}&\multicolumn{2}{c||}{}&\multicolumn{2}{>{$}c<{$}||}{=}	\\
\cline{3-18}
&&&&&&&&&&&&&&&&[0,0]&0	\\
\hline
\hline

\end{tabular}

}

\caption{Quantitative interval comparisons for example.}

\label{tabex1}

\end{center}

\end{table}

\fig{homoone} shows the example of \fig{allexrank2} equipped with standard
interval rank, and with the edges adorned with their separations and widths.
It is instructive to examine Table  \ref{tabex1} and \fig{homoone} for some
interesting observations. Here we abbreviate
$N=\|R^+(a),R^+(b)\|,W=W(\|R^+(a),R^+(b)\|)$.

\myeps{.275}{homoone}{Example poset from \fig{allex2} equipped with interval
ranks $R^+(a)$, with separation and widths on all links.}

\begin{itemize}

\item High separation width $W$ generally results when the base intervals
    $R^+(a),R^+(b)$ are also wide.

\item Naturally all spindle edges have $N=[1,1],W=0$: elements on spindle
    edges have minimal rank interval width, and are on distinct levels, but
    minimally so.

\item Some of these interval rank poset edges have high $W$, for example
    $\tup{C,B}$ and $\tup{B,JE}$. But all have low $N$, in the sense of a
    low midpoint, since they capture the rank differences among all elements
    in $\poset$.

\item Maximum width $W=2$ is additionally attained for pairs $\tup{C,E}$,
    because both have wide interval ranks to begin with.

\item Finally, the pairs $\tup{\top,B},\tup{B,\bot}$ stand out in
    \fig{homoone} with the highest $N=[1,3]$ (midpoint 2) and $W=2$. This
    is indicative of $B$'s position on a minimum size maximum chain, with
    the highest interval rank width and maximum distance from its parent
    and child.

\item But from Table  \ref{tabex1}, there are higher $N$, but involving
    ``long links'' terminating in $\bot$, e.g.\ $\tup{\top,\bot}$.

\end{itemize}

\section{Conclusion and Future Work}

%\todo[inline]{(EAH) Need to fill in the conclusion more}

In this paper we have introduced the concept of an interval valued
rank function available on all finite posets, and explored its
properties, including for a canonical standard interval rank. A number
of questions remain unaddressed, some of which we pursue
elsewhere \cite{JoCHoE14a}. These include:

\begin{itemize}

\item In \sec{intervals} above we introduced the concept of conjugate orders which
``partition'' the space of pairwise comparisons of order elements. For
us, these elements are real intervals, and we noted the weak and
subset interval orders are (near) conjugates. Interpreting standard
interval rank in the context of the conjugate subset order $\sub$ is
therefore interesting, as is exploring the possible conjugate orders
to the strong interval order $\le_S$.

\item Interval rank functions establish order homomorphisms from a
base poset to a poset of intervals. It is thus natural to ask about
interval rank functions on that poset of intervals, and thereby a
general iterative strategy for interval ranks.

\item Just as we hold that rank in posets is
    naturally and profitably extended to an interval-valued concept, so
    this work suggests that we should consider extending gradedness from a
    qualitative to a quantitative concept. A graded poset is all spindle,
    with all elements being precisely ranked with width 0, and {\it vice
    versa}. Therefore there should be a concept of posets which fail that
    criteria to a greater or lesser extent, that is, being more or less
    graded. In fact, we have sought such measures of gradedness as
    non-decreasing monotonically with iterations of $R^+$. Candidate
    measures we have considered have included the avarege interval rank
    width, the proportion of the spindle to the whole poset, and various
    distributional properties of the set of the lengths of the maximal
    chains. While our efforts have been so far unsuccessful,
    counterexamples were sometimes very difficult to find, and exploring
    the possibilities has been greatly illuminating.

\end{itemize}

%Additionally, we left the work of finding a canonical conjugate order to the
%strong interval order (if one exists) undeveloped in this paper. While the
%weak interval order $\le_W$ and the subset order $\sub$ stand as conjugates,
%in \sec{intervals} we discussed the availability of conjugates for the strong
%interval order $\le_S$. We have identified a number of possibilities
%experimentally, and would like to cast the question more generally. To do so
%we need to consider the comparability graph of the strong order \cite{gipc},
%that is, the graph $G=\tup{P,E}$ where $\tup{a,b} \in E \sub P^2$ if $a
%\sim_S b$. The complement of $G$ should be the comparability graph for any
%conjugate order to $\le_S$, if they exist. We also know that the complement
%of the comparability graph of $P$ must be an interval graph, that is, the
%intersection graph of the intervals $a \in P$. So we are brought to the
%question of whether an interval graph can, or must, also be a comparability
%graph. To our knowledge, this is an open question.

%\bibliographystyle{Cliff-nonote}	% (uses file "Cliff.bst")
%\bibliography{./work_plevel,all}

\end{document}